\newtheorem{thm}{Theorem}[section]
\newtheorem{cor}[thm]{Corollary}
\newtheorem{lem}[thm]{Lemma}
\newtheorem{defn}[thm]{Definition}
\newtheorem{exam}[thm]{Example}
\numberwithin{equation}{section}
\begin{document}
\title[Nilpotent evolution algebras]{\textbf{Five-dimensional nilpotent
evolution algebras}}
\author{A. S. Hegazi and Hani Abdelwahab}
\address[A. S. Hegazi -- Hani Abdelwahab]{Department of Mathematics, Faculty
of science, Mansoura university. (Egypt)}
\email{hegazi@mans.edu.eg --- haniamar1985@gmail.com}
\subjclass[2010]{Primary 17D92, 17D99.}
\keywords{Nilpotent evolution algebras, Annihilator extension, Automorphism
group.}

\begin{abstract}
The paper is devoted to give a complete classification of five-dimension
nilpotent evolution algebras over an algebraically closed field. We obtained
a list of $27$ isolated non-isomorphic nilpotent evolution algebras and $2$
families of non-isomorphic algebras depending on one parameter.
\end{abstract}

\maketitle

\section{Introduction}

Evolution algebras were introduced in 2006 by Tian and Vojtechovsky \cite{V}
motivated by the evolution laws of genetics. In this sense, the
multiplication in an evolution algebra expresses self-reproduction of
non-Mendelian genetics. Evolution algebras are not defined by identities,
and hence they do not form a variety of non-associative algebras; they are
commutative, but not in general power-associative, and possess some
distinguishing properties that lead to many interesting mathematical results.

\bigskip

The classification, up to isomorphism, of any class of algebras is a
fundamental and very difficult problem. In \cite{Hegazi}\ the authors
classified nilpotent evolution algebras of dimension up to four over an
algebraically closed field. The aim of this paper is to give a full
classification of nilpotent evolution algebras of dimension up to five over
an algebraically closed field $\mathbb{F}$.

\bigskip

The paper is organized as follows. In Section \ref{the metod} we describe
the method that we use to classify nilpotent evolution algebras, which also
appeared in \cite{Hegazi}. \ In Section\ \ref{four}\ we list, up to
isomorphism, all nilpotent evolution algebras of dimension up to four. In
Section \ref{classification}\ the classification of five-dimensional
nilpotent evolution algebras is given.

\section{The method}

\label{the metod}

\begin{defn}
\emph{(See \cite{T}.)} An \emph{evolution algebra} is an algebra ${\mathcal{E%
}}$ containing a basis (as a vector space) $B=\left\{ e_{1},\ldots
,e_{n}\right\} $ such that $e_{i}e_{j}=0$ for any $1\leq i<j\leq n$. A basis
with this property is called a \emph{natural basis}.
\end{defn}

\begin{defn}
An \emph{ideal} $\mathcal{I}$ of an evolution algebra ${\mathcal{E}}$ is an
evolution algebra satisfying ${\mathcal{E}}\mathcal{I}\subseteq \mathcal{I}$.
\end{defn}

\smallskip Given an evolution algebra ${\mathcal{E}}$, consider its \emph{%
annihilator }%
\begin{equation*}
ann\left( {\mathcal{E}}\right) :=\left\{ x\in {\mathcal{E}}:x{\mathcal{E}}%
=0\right\} .
\end{equation*}

\begin{lem}
\emph{(See \cite{Elduque}.)} Let $B=\left\{ e_{1},\ldots ,e_{n}\right\} $ be
a natural basis of an evolution algebra ${\mathcal{E}}$. Then 
\begin{equation*}
ann\left( {\mathcal{E}}\right) =\mathrm{span}\left\{
e_{i}:e_{i}^{2}=0\right\} .
\end{equation*}
\end{lem}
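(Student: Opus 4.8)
The plan is to prove the claimed equality by double inclusion, exploiting the defining property of a natural basis that $e_{i}e_{j}=0$ whenever $i\neq j$. Note that this vanishing holds for \emph{all} unequal indices, not merely for $i<j$: since the algebra is commutative, $e_{i}e_{j}=e_{j}e_{i}$, so the case $i>j$ reduces to the defining case. Throughout I write $x=\sum_{j=1}^{n}\lambda_{j}e_{j}$ for a generic element of ${\mathcal{E}}$ and set $W=\mathrm{span}\left\{ e_{i}:e_{i}^{2}=0\right\}$.

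First I would establish the inclusion $W\subseteq ann({\mathcal{E}})$. By bilinearity of the multiplication it suffices to check that each generator $e_{i}$ with $e_{i}^{2}=0$ annihilates ${\mathcal{E}}$, and for this it is enough to test $e_{i}$ against the basis. Computing $e_{i}e_{j}$ gives $0$ for $j\neq i$ by the natural-basis property and $e_{i}^{2}=0$ for $j=i$ by hypothesis; hence $e_{i}e_{j}=0$ for every $j$, so $e_{i}\in ann({\mathcal{E}})$ and the inclusion follows by taking spans.

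The reverse inclusion $ann({\mathcal{E}})\subseteq W$ is the substantive half of the argument. Given $x=\sum_{j}\lambda_{j}e_{j}\in ann({\mathcal{E}})$, I would multiply $x$ by an arbitrary basis vector $e_{k}$; because all cross products vanish, the sum collapses to a single term,
\begin{equation*}
0=xe_{k}=\sum_{j=1}^{n}\lambda_{j}\,e_{j}e_{k}=\lambda_{k}\,e_{k}^{2},
\end{equation*}
so that $\lambda_{k}e_{k}^{2}=0$ for every $k$. This is precisely where the definition of the natural basis does all the work, reducing the annihilator condition to a diagonal system of scalar relations.

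The key step, and the one point requiring a moment's care, is to pass from $\lambda_{k}e_{k}^{2}=0$ to the conclusion $\lambda_{k}=0$ for each index $k$ with $e_{k}^{2}\neq 0$. Here $e_{k}^{2}$ is a \emph{nonzero vector} of ${\mathcal{E}}$ while $\lambda_{k}$ is a scalar, so the relation $\lambda_{k}e_{k}^{2}=0$ forces $\lambda_{k}=0$. Consequently only coefficients attached to indices with $e_{k}^{2}=0$ can survive, giving $x=\sum_{k:\,e_{k}^{2}=0}\lambda_{k}e_{k}\in W$. Combining the two inclusions yields $ann({\mathcal{E}})=W=\mathrm{span}\left\{ e_{i}:e_{i}^{2}=0\right\}$, as required.
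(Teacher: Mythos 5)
Your proof is correct: both inclusions are handled properly, the reduction of $xe_{k}=\lambda_{k}e_{k}^{2}$ via the natural-basis property (extended to all $i\neq j$ by commutativity) is exactly the right computation, and the passage from $\lambda_{k}e_{k}^{2}=0$ to $\lambda_{k}=0$ when $e_{k}^{2}\neq 0$ is sound. The paper itself gives no proof of this lemma (it simply cites Elduque--Labra), but your argument is the standard one and is precisely what that reference does.
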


Given an evolution algebra ${\mathcal{E}}$, we introduce the following
sequence of subspaces:%
\begin{equation*}
\begin{array}{cc}
{\mathcal{E}}^{\left\langle 1\right\rangle }={\mathcal{E}}, & {\mathcal{E}}%
^{\left\langle k+1\right\rangle }={\mathcal{E}}^{\left\langle k\right\rangle
}{\mathcal{E}}.%
\end{array}%
\end{equation*}

\begin{defn}
An algebra ${\mathcal{E}}$ is called \emph{nilpotent} if there exists $n\in {%
\mathbb{N}}$ such that ${\mathcal{E}}^{\left\langle n\right\rangle }=0$, and
the minimal such number is called the \emph{index of nilpotency}.
\end{defn}

Let ${\mathcal{E}}$ be an evolution algebra with a natural basis $B_{{%
\mathcal{E}}}=\left\{ e_{1},\ldots ,e_{m}\right\} $ and $V$ be a vector
space with a basis $B_{V}=\left\{ e_{m+1},\ldots ,e_{n}\right\} $. Let ${%
\mathcal{Z}}\left( {\mathcal{E\times E}},V\right) $ be the set of all
bilinear maps $\theta :{\mathcal{E\times E\longrightarrow }}V$ such that $%
\theta \left( e_{i},e_{j}\right) =0$ for all $i\neq j\mathcal{\ }$. Set ${%
\mathcal{E}}_{\theta }={\mathcal{E\oplus }}V$, and then define a
multiplication on ${\mathcal{E}}_{\theta }$ by $e_{i}\star
e_{j}=e_{i}e_{j}\mid _{{\mathcal{E}}}+\theta \left( e_{i},e_{j}\right) $ if $%
1\leq i,j\leq m$, and otherwise $e_{i}\star e_{j}=0$. Then ${\mathcal{E}}%
_{\theta }$ is an evolution algebra with a natural basis $B_{{\mathcal{E}}%
_{\theta }}=\left\{ e_{1},\ldots ,e_{n}\right\} $. The evolution algebra ${%
\mathcal{E}}_{\theta }$\ is called a $\left( n-m\right) $\emph{-dimensional
annihilator extension} of ${\mathcal{E}}$ by $V$.

\bigskip

Let $\theta :{\mathcal{E\times E}}\longrightarrow V$ be a symmetric bilinear
map. Then the set $\theta ^{\bot }=\left\{ x\in {\mathcal{E}}:\theta (x,{%
\mathcal{E}})=0\right\} $ is called the \emph{radical} of $\theta $.

\begin{lem}
\emph{(See \cite[Lemma 11]{Hegazi}.)}\label{Ann(J+V)} Let $\theta \in {%
\mathcal{Z}}\left( {\mathcal{E\times E}},V\right) $. Then $ann({\mathcal{E}}%
_{\theta })=\left( \theta ^{\bot }\cap ann\left( {\mathcal{E}}\right)
\right) \oplus V.$
\end{lem}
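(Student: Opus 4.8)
The plan is to compute $ann(\mathcal{E}_{\theta})$ directly from its definition, testing an arbitrary element of $\mathcal{E}_{\theta}$ against an arbitrary element of the algebra and exploiting the defining property that the subspace $V$ sits inside the annihilator part of the extension.

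First I would write a general element of $\mathcal{E}_{\theta}=\mathcal{E}\oplus V$ as $x+v$ with $x\in\mathcal{E}$ and $v\in V$, and a second element as $y+w$ with $y\in\mathcal{E}$ and $w\in V$. Expanding the product $(x+v)\star(y+w)$ by bilinearity produces four terms; since $e_{i}\star e_{j}=0$ whenever either index exceeds $m$, every term involving a factor drawn from $V$ vanishes, so $(x+v)\star(y+w)=x\star y$. Writing $x=\sum_{i}a_{i}e_{i}$ and $y=\sum_{j}b_{j}e_{j}$ and using $e_{i}\star e_{j}=e_{i}e_{j}\mid_{\mathcal{E}}+\theta(e_{i},e_{j})$, this simplifies to
\begin{equation*}
(x+v)\star(y+w)=xy+\theta(x,y),
\end{equation*}
where $xy$ denotes the product taken in $\mathcal{E}$.

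Next I would observe that this expression does not depend on $v$ or $w$, so $x+v$ annihilates all of $\mathcal{E}_{\theta}$ exactly when $xy+\theta(x,y)=0$ for every $y\in\mathcal{E}$. The key step is to exploit the directness of the sum $\mathcal{E}\oplus V$: since $xy\in\mathcal{E}$ while $\theta(x,y)\in V$, the vanishing of their sum forces $xy=0$ and $\theta(x,y)=0$ separately. Demanding both for every $y$ then translates, via the definitions of the annihilator and of the radical $\theta^{\bot}$, into the two conditions $x\in ann(\mathcal{E})$ and $x\in\theta^{\bot}$, whereas $v$ is left entirely unconstrained.

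Finally I would assemble these conditions: $x+v\in ann(\mathcal{E}_{\theta})$ if and only if $x\in\theta^{\bot}\cap ann(\mathcal{E})$ and $v\in V$, which is precisely the asserted decomposition $ann(\mathcal{E}_{\theta})=\left(\theta^{\bot}\cap ann(\mathcal{E})\right)\oplus V$. I do not anticipate a genuine obstacle here; the only point deserving care is the clean separation of the $\mathcal{E}$-component from the $V$-component, and this rests entirely on the directness of the sum together with the fact that $\theta$ takes its values in $V$.
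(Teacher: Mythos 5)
Your argument is correct. Note that the paper itself gives no proof of this lemma, citing instead \cite[Lemma 11]{Hegazi}, so there is nothing internal to compare against; your direct computation is the natural argument. All the steps check out: the product $(x+v)\star(y+w)=xy+\theta(x,y)$ is independent of $v,w$ because $V\star\mathcal{E}_{\theta}=0$ by construction, and the directness of $\mathcal{E}\oplus V$ together with $xy\in\mathcal{E}$ and $\theta(x,y)\in V$ cleanly splits the annihilation condition into $x\in ann(\mathcal{E})$ and $x\in\theta^{\bot}$, which is exactly the claimed decomposition.
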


So, if $V$ is a non-trivial vector space then the annihilator of ${\mathcal{E%
}}_{\theta }$\ will be non-trivial.

\begin{thm}
Let ${\mathcal{E}}$ be an evolution algebra with $ann\left( {\mathcal{E}}%
\right) \neq 0$. Then there exist, up to isomorphism, a unique evolution
algebra $E$, and a $\theta \in {\mathcal{Z}}\left( E\times E,V\right) $ with 
$\theta ^{\bot }\cap ann\left( E\right) =0$ such that ${\mathcal{E}}\cong
E_{\theta }$ and ${\mathcal{E}}/ann\left( {\mathcal{E}}\right) \cong E$.
\end{thm}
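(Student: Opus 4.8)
The plan is to adapt the classical annihilator (central) extension argument to the evolution setting. First I would fix a natural basis $\{e_1,\dots,e_n\}$ of $\mathcal{E}$ and, using the description $ann(\mathcal{E})=\mathrm{span}\{e_i:e_i^2=0\}$, reorder it so that $e_1^2,\dots,e_m^2$ are the nonzero squares and $e_{m+1}^2=\cdots=e_n^2=0$; then $V:=ann(\mathcal{E})=\mathrm{span}\{e_{m+1},\dots,e_n\}$. Put $E:=\mathcal{E}/V$ and let $\pi:\mathcal{E}\to E$ be the quotient map. Before anything else I would check that $E$ is genuinely an evolution algebra: the images $\bar e_1,\dots,\bar e_m$ form a basis of $E$, and since $e_ie_j=0$ in $\mathcal{E}$ for $i\neq j$ we get $\bar e_i\bar e_j=\overline{e_ie_j}=0$, so $\{\bar e_1,\dots,\bar e_m\}$ is a natural basis of $E$ and $\mathcal{E}/ann(\mathcal{E})\cong E$ holds by construction.

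Next I would build the cocycle from a compatible section. Let $W=\mathrm{span}\{e_1,\dots,e_m\}$, so that $\mathcal{E}=W\oplus V$, and let $s:E\to\mathcal{E}$ be the linear section with $s(\bar e_i)=e_i$ for $1\le i\le m$. Define $\theta:E\times E\to V$ by $\theta(a,b)=s(a)s(b)-s(ab)$; this lands in $V=\ker\pi$ because $\pi$ is a homomorphism. It is bilinear, and on basis elements with $i\neq j$ we have $\theta(\bar e_i,\bar e_j)=e_ie_j-s(\bar e_i\bar e_j)=0$, so $\theta\in\mathcal{Z}(E\times E,V)$. I would then show that $\phi:E_\theta=E\oplus V\to\mathcal{E}$, $\phi(a+v)=s(a)+v$, is an isomorphism: it is a linear bijection since $\mathcal{E}=W\oplus V$, and for $a,b\in E$ one has $a\star b=ab+\theta(a,b)$ in $E_\theta$, whence $\phi(a\star b)=s(ab)+\big(s(a)s(b)-s(ab)\big)=s(a)s(b)=\phi(a)\phi(b)$, while all products involving $V$ vanish on both sides. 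Thus $\mathcal{E}\cong E_\theta$.

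To complete existence I would invoke Lemma \ref{Ann(J+V)}, which gives $ann(E_\theta)=(\theta^\bot\cap ann(E))\oplus V$. Transporting the annihilator through $\phi$ and using $ann(\mathcal{E})=V$ yields $\dim(\theta^\bot\cap ann(E))+\dim V=\dim ann(E_\theta)=\dim ann(\mathcal{E})=\dim V$, which forces $\theta^\bot\cap ann(E)=0$, exactly the required radical condition.

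For the uniqueness of $E$, suppose $\mathcal{E}\cong E'_{\theta'}$ for some evolution algebra $E'$ and $\theta'\in\mathcal{Z}(E'\times E',V')$ with $(\theta')^\bot\cap ann(E')=0$. Lemma \ref{Ann(J+V)} then gives $ann(E'_{\theta'})=V'$, so $E'\cong E'_{\theta'}/V'\cong\mathcal{E}/ann(\mathcal{E})=E$, and hence $E$ is determined up to isomorphism. The only genuinely delicate points I expect are the bookkeeping ones: verifying that the quotient really carries a natural basis, so that $E$ is an evolution algebra rather than merely a commutative one, and choosing the section $s$ to respect this basis so that $\theta$ lies in $\mathcal{Z}(E\times E,V)$ and not just in the larger space of symmetric bilinear maps. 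Everything else is a direct verification.
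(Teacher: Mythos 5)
Your proposal is correct and follows essentially the same route as the paper: you split $\mathcal{E}=W\oplus ann(\mathcal{E})$, endow the complement (equivalently, the quotient) with the projected product, and take $\theta$ to be the defect $s(a)s(b)-s(ab)$, which on the natural basis is exactly the paper's $e_ie_j-P(e_ie_j)$. Your version is in fact slightly more complete, since you justify $\theta^{\bot}\cap ann(E)=0$ by a dimension count via Lemma \ref{Ann(J+V)} and spell out the uniqueness of $E$, both of which the paper's proof only asserts.
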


\begin{proof}
Let $\left\{ e_{1},\ldots ,e_{n}\right\} $ be a natural basis of ${\mathcal{E%
}}$ such that $\left\{ e_{m+1},\ldots ,e_{n}\right\} $ be a basis of $%
ann\left( {\mathcal{E}}\right) $. Set $V=ann\left( {\mathcal{E}}\right) $,
and let $E$ be a complement of $V$ in ${\mathcal{E}}$ (i.e., ${\mathcal{E}}%
=E\oplus V$). Let $P:{\mathcal{E}}\longrightarrow E$ be the projection of ${%
\mathcal{E}}$ onto $E$ (i.e., $P\left( e_{i}\right) =e_{i}$ if $1\leq i\leq
m $, and otherwise $P\left( e_{i}\right) =0$). Define a multiplication on $E$
by $e_{i}\ast _{E}e_{j}=P\left( e_{i}e_{j}\right) $ for $1\leq i,j\leq m$.
Then $E$ is an evolution algebra. Moreover, $P\left( e_{i}e_{j}\right)
=P\left( e_{i}\right) \ast _{E}P\left( e_{j}\right) $ for $1\leq i,j\leq n$.
Hence $P$ is a homomorphism of evolution algebras. So, ${\mathcal{E}}/V\cong
E$. Define a bilinear map $\theta :E\times E\longrightarrow V$ by $\theta
\left( e_{i},e_{j}\right) =$ $e_{i}e_{j}-P\left( e_{i}e_{j}\right) $ for $%
1\leq i,j\leq m$. Then $\theta \in {\mathcal{Z}}\left( E\times E,V\right) $
and $\theta ^{\bot }\cap ann\left( E\right) =0$. Therefore, $E_{\theta }$ is
an evolution algebra. Further, $e_{i}\star _{E_{\theta }}e_{j}=e_{i}\ast
_{E}e_{j}+\theta \left( e_{i},e_{j}\right) =e_{i}e_{j}$ for all $1\leq
i,j\leq m$. So, $E_{\theta }$ is the same evolution algebra as ${\mathcal{E}}
$.
\end{proof}

Now, for a linear map $f\in Hom\left( {\mathcal{E}},V\right) $, if we define 
$\delta f:{\mathcal{E\times E\longrightarrow }}V$ by $\delta f\left(
e_{i},e_{j}\right) =f\left( e_{i}e_{j}\right) $, then $\delta f\in {\mathcal{%
Z}}\left( {\mathcal{E\times E}},V\right) $. Let ${\mathcal{B}}\left( {%
\mathcal{E\times E}},V\right) =\left\{ \delta f:f\in Hom\left( {\mathcal{E}}%
,V\right) \right\} $. Then ${\mathcal{B}}\left( {\mathcal{E\times E}}%
,V\right) $\ is a subspace of ${\mathcal{Z}}\left( {\mathcal{E\times E}}%
,V\right) $. We define ${\mathcal{H}}\left( {\mathcal{E\times E}},V\right) $
to be the quotient space ${\mathcal{Z}}\left( {\mathcal{E\times E}},V\right) %
\big/{\mathcal{B}}\left( {\mathcal{E\times E}},V\right) $. The equivalence
class of $\theta \in {\mathcal{Z}}\left( {\mathcal{E\times E}},V\right) $ is
denoted $\left[ \theta \right] \in {\mathcal{H}}\left( {\mathcal{E\times E}}%
,V\right) $.

\begin{lem}
\label{equivalence class}Let $\theta ,\vartheta \in {\mathcal{Z}}\left( {%
\mathcal{E\times E}},V\right) $ such that $\left[ \theta \right] =\left[
\vartheta \right] $. Then

\begin{enumerate}
\item $\theta ^{\bot }\cap ann\left( {\mathcal{E}}\right) =\vartheta ^{\bot
}\cap ann\left( {\mathcal{E}}\right) $ or, equivalently, $ann\left( {%
\mathcal{E}}_{\theta }\right) =ann\left( {\mathcal{E}}_{\vartheta }\right) $.

\item ${\mathcal{E}}_{\theta }\cong {\mathcal{E}}_{\vartheta }$.
\end{enumerate}
\end{lem}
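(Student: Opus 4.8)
The plan is to unwind what the hypothesis $\left[ \theta \right] =\left[ \vartheta \right] $ actually says and then use it twice. By definition of ${\mathcal{H}}\left( {\mathcal{E\times E}},V\right) $, the equality of classes means $\theta -\vartheta \in {\mathcal{B}}\left( {\mathcal{E\times E}},V\right) $, so there is a linear map $f\in Hom\left( {\mathcal{E}},V\right) $ with $\theta -\vartheta =\delta f$, i.e. $\theta \left( e_{i},e_{j}\right) -\vartheta \left( e_{i},e_{j}\right) =f\left( e_{i}e_{j}\right) $ for all $i,j$. Both parts will follow from this single relation.

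For part (1) I would first record that $\delta f$ kills the annihilator in its first argument: if $x\in ann\left( {\mathcal{E}}\right) $ then $xy=0$ for every $y\in {\mathcal{E}}$, hence $\delta f\left( x,y\right) =f\left( xy\right) =f\left( 0\right) =0$. Consequently $\theta \left( x,y\right) =\vartheta \left( x,y\right) $ for every $x\in ann\left( {\mathcal{E}}\right) $ and every $y\in {\mathcal{E}}$. Therefore, restricted to $ann\left( {\mathcal{E}}\right) $, the conditions $\theta \left( x,{\mathcal{E}}\right) =0$ and $\vartheta \left( x,{\mathcal{E}}\right) =0$ coincide, which is precisely $\theta ^{\bot }\cap ann\left( {\mathcal{E}}\right) =\vartheta ^{\bot }\cap ann\left( {\mathcal{E}}\right) $. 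The equivalent reformulation $ann\left( {\mathcal{E}}_{\theta }\right) =ann\left( {\mathcal{E}}_{\vartheta }\right) $ is then immediate from Lemma \ref{Ann(J+V)}, since that lemma writes each annihilator as $\left( \cdot \right) \oplus V$ with the two $\left( \cdot \right) $ factors just shown to be equal.

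For part (2) I would exhibit an explicit isomorphism built from $f$. Identifying ${\mathcal{E}}_{\theta }={\mathcal{E}}\oplus V={\mathcal{E}}_{\vartheta }$ as vector spaces, define $\phi \left( x\right) =x-f\left( x\right) $ for $x\in {\mathcal{E}}$ and $\phi \mid _{V}=id_{V}$. Relative to the decomposition ${\mathcal{E}}\oplus V$ this map is unitriangular, hence a linear bijection. The substance is checking that $\phi $ preserves products for $1\leq i,j\leq m$: since $f\left( e_{i}\right) ,f\left( e_{j}\right) \in V$ and every product involving a factor in $V$ vanishes in ${\mathcal{E}}_{\vartheta }$, one gets $\phi \left( e_{i}\right) \star _{\vartheta }\phi \left( e_{j}\right) =e_{i}\star _{\vartheta }e_{j}=e_{i}e_{j}+\vartheta \left( e_{i},e_{j}\right) $, while $\phi \left( e_{i}\star _{\theta }e_{j}\right) =\phi \left( e_{i}e_{j}+\theta \left( e_{i},e_{j}\right) \right) =e_{i}e_{j}-f\left( e_{i}e_{j}\right) +\theta \left( e_{i},e_{j}\right) $. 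These two expressions agree exactly because $\theta \left( e_{i},e_{j}\right) -\vartheta \left( e_{i},e_{j}\right) =f\left( e_{i}e_{j}\right) $, and for indices exceeding $m$ both sides are trivially $0$. Thus $\phi $ is a bijective algebra homomorphism; since it sends the natural basis $\left\{ e_{i}\right\} $ to a natural basis (as $\phi \left( e_{i}\right) \star _{\vartheta }\phi \left( e_{j}\right) =\phi \left( e_{i}\star _{\theta }e_{j}\right) =0$ for $i\neq j$), it is an isomorphism of evolution algebras.

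I do not expect a genuine obstacle here: the argument is essentially bookkeeping once the cocycle relation $\theta -\vartheta =\delta f$ is in hand. The only point demanding care is the sign in the definition of $\phi $ (the map must be $x\mapsto x-f\left( x\right) $, not $x\mapsto x+f\left( x\right) $), which is forced by the direction of $\theta -\vartheta =\delta f$, together with the systematic use of the fact that all products meeting $V$ are zero.
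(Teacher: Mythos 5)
Your proof is correct and follows essentially the same route as the paper's: part (1) is the paper's own argument that $\delta f$ vanishes on $ann\left( {\mathcal{E}}\right) \times {\mathcal{E}}$ because $f\left( xy\right) =f\left( 0\right) =0$ there, so the two radicals meet $ann\left( {\mathcal{E}}\right) $ in the same subspace, and Lemma \ref{Ann(J+V)} converts this into equality of annihilators. For part (2) the paper merely cites \cite[Lemma 5]{Hegazi}; the explicit unitriangular isomorphism $x+v\mapsto x-f\left( x\right) +v$ that you construct (with the sign correctly matched to $\theta -\vartheta =\delta f$ and using that all products meeting $V$ vanish) is precisely the standard argument that citation stands for, and your verification of it is sound.
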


\begin{proof}
\begin{enumerate}
\item Since $\left[ \theta \right] =\left[ \vartheta \right] $, $\vartheta
=\theta +\delta f$ for some $f\in Hom\left( {\mathcal{E}},V\right) $. So $%
\vartheta \left( x,y\right) =\theta \left( x,y\right) +f\left( xy\right) $
for all $x,y\in {\mathcal{E}}$. Hence $\theta \left( x,y\right) =xy=0$ if
and only if $\vartheta \left( x,y\right) =xy=0$. Therefore $\theta ^{\bot
}\cap ann\left( {\mathcal{E}}\right) =\vartheta ^{\bot }\cap ann\left( {%
\mathcal{E}}\right) $. Then, by Lemma \ref{Ann(J+V)}, $ann\left( {\mathcal{E}%
}_{\theta }\right) =ann\left( {\mathcal{E}}_{\vartheta }\right) $.

\item See \cite[Lemma 5]{Hegazi}.
\end{enumerate}
\end{proof}

Let ${\mathcal{E}}$ be an evolution algebra with a natural basis $B_{{%
\mathcal{E}}}=\left\{ e_{1},\ldots ,e_{m}\right\} $ and $V$ be a vector
space with a basis $B_{V}=\left\{ e_{m+1},\ldots ,e_{n}\right\} $. Then a $%
\theta \in {\mathcal{Z}}\left( {\mathcal{E\times E}},V\right) $ can be
uniquely written as $\theta \left( e_{i},e_{i}\right) =\underset{j=1}{%
\overset{n-m}{\sum }}\theta _{j}\left( e_{i},e_{i}\right) e_{m+j}$, where $%
\theta _{j}\in {\mathcal{Z}}\left( {\mathcal{E\times E}},\mathbb{F}\right) $%
. Moreover, $\theta \in {\mathcal{B}}\left( {\mathcal{E\times E}},V\right) $
if and only if all $\theta _{j}\in {\mathcal{B}}\left( {\mathcal{E\times E}},%
\mathbb{F}\right) $. Furthermore, $\theta ^{\bot }=\theta _{1}^{\bot }\cap
\ldots \cap $ $\theta _{n-m}^{\bot }$.\bigskip

\begin{lem}
\emph{(See \cite[Lemma 6]{Hegazi}.) }\label{Qouitent}Let ${\mathcal{E}}$ be
an n-dimensional nilpotent evolution algebra. Let $e_{1},\ldots ,e_{m}$ be a
basis of ${\mathcal{E}}^{\left\langle 2\right\rangle }$. Then $\dim {%
\mathcal{H}}\left( {\mathcal{E\times E}},\mathbb{F}\right) =n-m$ and ${%
\mathcal{B}}\left( {\mathcal{E\times E}},\mathbb{F}\right) =\left\langle
\delta e_{1}^{\ast },\ldots ,\delta e_{m}^{\ast }\right\rangle $ where $%
e_{i}^{\ast }(e_{j})=\delta _{ij}$ and $\delta _{ij}$ is the \emph{Kronecker
delta}.
\end{lem}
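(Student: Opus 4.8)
The plan is to compute the dimensions of $\mathcal{Z}\left( \mathcal{E}\times \mathcal{E},\mathbb{F}\right) $ and $\mathcal{B}\left( \mathcal{E}\times \mathcal{E},\mathbb{F}\right) $ separately and then use that $\mathcal{H}=\mathcal{Z}/\mathcal{B}$ gives $\dim \mathcal{H}=\dim \mathcal{Z}-\dim \mathcal{B}$. First I would fix a natural basis $\left\{ e_{1},\ldots ,e_{n}\right\} $ of $\mathcal{E}$ adapted to $\mathcal{E}^{\left\langle 2\right\rangle }$, so that the given basis $e_{1},\ldots ,e_{m}$ of $\mathcal{E}^{\left\langle 2\right\rangle }$ consists of the first $m$ natural basis vectors; such a basis exists because $\mathcal{E}$ is nilpotent, so $\mathcal{E}^{\left\langle 2\right\rangle }$ is a proper ideal carrying a compatible natural basis. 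Since any $\theta \in \mathcal{Z}\left( \mathcal{E}\times \mathcal{E},\mathbb{F}\right) $ vanishes on all off-diagonal pairs $\left( e_{i},e_{j}\right) $ with $i\neq j$, it is completely determined by its $n$ diagonal values $\theta \left( e_{i},e_{i}\right) $; hence $\dim \mathcal{Z}\left( \mathcal{E}\times \mathcal{E},\mathbb{F}\right) =n$, and it remains only to show that $\dim \mathcal{B}\left( \mathcal{E}\times \mathcal{E},\mathbb{F}\right) =m$ together with the stated spanning set.

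Next I would analyse the coboundary map $f\mapsto \delta f$ on the dual basis. Writing $e_{k}^{2}=\sum_{i=1}^{m}a_{ki}e_{i}$, which is legitimate because $e_{k}^{2}\in \mathcal{E}^{\left\langle 2\right\rangle }=\mathrm{span}\left\{ e_{1},\ldots ,e_{m}\right\} $, one computes $\delta e_{j}^{\ast }\left( e_{k},e_{k}\right) =e_{j}^{\ast }\left( e_{k}^{2}\right) =a_{kj}$ for $1\leq j\leq m$, while $\delta e_{j}^{\ast }\left( e_{k},e_{k}\right) =0$ for $j>m$, the latter because $e_{j}^{\ast }$ annihilates $\mathcal{E}^{\left\langle 2\right\rangle }$ when $j>m$. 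Since each $\delta e_{j}^{\ast }$ is diagonal (the basis being natural), this forces $\delta e_{j}^{\ast }=0$ for all $j>m$, and therefore $\mathcal{B}\left( \mathcal{E}\times \mathcal{E},\mathbb{F}\right) =\left\langle \delta e_{1}^{\ast },\ldots ,\delta e_{n}^{\ast }\right\rangle =\left\langle \delta e_{1}^{\ast },\ldots ,\delta e_{m}^{\ast }\right\rangle $, which is the second assertion.

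To finish I would prove that $\delta e_{1}^{\ast },\ldots ,\delta e_{m}^{\ast }$ are linearly independent, so that $\dim \mathcal{B}=m$. Suppose $\sum_{j=1}^{m}\lambda _{j}\delta e_{j}^{\ast }=0$; evaluating on each diagonal pair $\left( e_{k},e_{k}\right) $ yields $\sum_{j=1}^{m}\lambda _{j}a_{kj}=0$ for every $k=1,\ldots ,n$. Because $\mathcal{E}^{\left\langle 2\right\rangle }=\mathrm{span}\left\{ e_{1}^{2},\ldots ,e_{n}^{2}\right\} $ has dimension $m$, the coefficient rows $\left( a_{k1},\ldots ,a_{km}\right) $, $k=1,\ldots ,n$, span $\mathbb{F}^{m}$, so the vector $\left( \lambda _{1},\ldots ,\lambda _{m}\right) $ must be zero. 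Hence $\dim \mathcal{B}\left( \mathcal{E}\times \mathcal{E},\mathbb{F}\right) =m$ and consequently $\dim \mathcal{H}\left( \mathcal{E}\times \mathcal{E},\mathbb{F}\right) =\dim \mathcal{Z}-\dim \mathcal{B}=n-m$.

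The dimension count for $\mathcal{Z}$ and the coboundary evaluations are routine; the steps I expect to require the most care are the two structural facts underlying the argument. First, that a natural basis can be chosen whose first $m$ members form a basis of $\mathcal{E}^{\left\langle 2\right\rangle }$ — this is exactly where nilpotency enters. Second, the rank argument that the rows $\left( a_{k1},\ldots ,a_{km}\right) $ span $\mathbb{F}^{m}$, which is precisely the statement that the squares $e_{k}^{2}$ generate the $m$-dimensional space $\mathcal{E}^{\left\langle 2\right\rangle }$; once this is secured, both the independence of the $\delta e_{j}^{\ast }$ and the final dimension formula follow at once.
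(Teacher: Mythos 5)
Your argument is correct, and in fact the paper offers no proof of this lemma to compare against: it simply cites \cite[Lemma 6]{Hegazi}. Your route is the natural one — count $\dim {\mathcal{Z}}\left( {\mathcal{E\times E}},\mathbb{F}\right) =n$ from the diagonal values, observe that $\delta f$ depends only on the restriction of $f$ to ${\mathcal{E}}^{\left\langle 2\right\rangle }=\mathrm{span}\left\{ e_{k}^{2}\right\} $ so that $\delta e_{j}^{\ast }=0$ for $j>m$, and get injectivity of $f\mapsto \delta f$ on $\left\langle e_{1}^{\ast },\ldots ,e_{m}^{\ast }\right\rangle $ from the fact that the squares $e_{k}^{2}$ span ${\mathcal{E}}^{\left\langle 2\right\rangle }$. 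All three steps are sound.

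One small caution: you present the existence of a natural basis whose first $m$ members form a basis of ${\mathcal{E}}^{\left\langle 2\right\rangle }$ as something you construct (``a proper ideal carrying a compatible natural basis''), but you do not actually prove it, and for an arbitrary ideal of an evolution algebra such compatibility is not automatic. Fortunately nothing in your argument really depends on it: the lemma's hypothesis already supplies $e_{1},\ldots ,e_{m}$ as basis vectors of ${\mathcal{E}}$ spanning ${\mathcal{E}}^{\left\langle 2\right\rangle }$ (that is how $e_{i}^{\ast }$ is defined), the count $\dim {\mathcal{Z}}=n$ uses only the natural basis, and the evaluations $\delta e_{j}^{\ast }\left( e_{k},e_{k}\right) =e_{j}^{\ast }\left( e_{k}^{2}\right) $ and the rank argument go through even if the $e_{i}$ with $i\leq m$ are not themselves natural basis vectors. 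So the claim you flag as the delicate structural input is in fact dispensable, and the proof stands without it.
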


Let ${\mathcal{E}}$ be an evolution algebra with a natural basis $B_{{%
\mathcal{E}}}=\left\{ e_{1},\ldots ,e_{m}\right\} $. We define $\delta
_{e_{i},e_{j}}$ to be the symmetric bilinear form $\delta _{e_{i},e_{j}}:{%
\mathcal{E\times E\longrightarrow }}\mathbb{F}$ with $\delta
_{e_{i},e_{j}}\left( e_{l},e_{m}\right) =1$ if $\left\{ i,j\right\} =\left\{
l,m\right\} $, and otherwise $\delta _{e_{i},e_{j}}\left( e_{l},e_{m}\right)
=0$. Denote the space of all symmetric bilinear forms on ${\mathcal{E}}$ by $%
Sym\left( {\mathcal{E}}\right) $. Then $Sym\left( {\mathcal{E}}\right)
=\left\langle \delta _{e_{i},e_{j}}:1\leq i\leq j\leq m\right\rangle $ while 
${\mathcal{Z}}\left( {\mathcal{E\times E}},\mathbb{F}\right) =\left\langle
\delta _{e_{i},e_{i}}:1=1,2,\ldots ,m\right\rangle $.

\begin{exam}
Let ${\mathcal{E}}:e_{1}^{2}=e_{2}$ be a 2-dimensional nilpotent evolution
algebra. Then ${\mathcal{Z}}\left( {\mathcal{E\times E}},\mathbb{F}\right)
=\left\langle \delta _{e_{1},e_{1}},\delta _{e_{2},e_{2}}\right\rangle $. By
Lemma \ref{Qouitent}, ${\mathcal{B}}\left( {\mathcal{E\times E}},\mathbb{F}%
\right) =\left\langle \delta e_{1}^{\ast }\right\rangle $. Since ${\mathcal{B%
}}\left( {\mathcal{E\times E}},\mathbb{F}\right) $ is a subspace of ${%
\mathcal{Z}}\left( {\mathcal{E\times E}},\mathbb{F}\right) $, $\delta
e_{2}^{\ast }=\alpha \delta _{e_{1},e_{1}}+\beta \delta _{e_{2},e_{2}}$.
Then $\alpha =\delta e_{2}^{\ast }\left( e_{1},e_{1}\right) =e_{1}^{\ast
}\left( e_{1}^{2}\right) =1$ and $\beta =\delta e_{2}^{\ast }\left(
e_{2},e_{2}\right) =e_{1}^{\ast }\left( e_{2}^{2}\right) =0$. So, ${\mathcal{%
B}}\left( {\mathcal{E\times E}},\mathbb{F}\right) =\left\langle \delta
_{e_{1},e_{1}}\right\rangle $ and therefore ${\mathcal{H}}\left( {\mathcal{%
E\times E}},\mathbb{F}\right) =\left\langle \left[ \delta _{e_{2},e_{2}}%
\right] \right\rangle $.
\end{exam}

Let $Aut\left( {\mathcal{E}}\right) $ be the automorphism group of the
algebra ${\mathcal{E}}$. Let $\phi \in Aut\left( {\mathcal{E}}\right) $. For 
$\theta \in {\mathcal{Z}}\left( {\mathcal{E\times E}},V\right) $ defining $%
\phi \theta \left( e_{i},e_{j}\right) =\theta \left( \phi \left(
e_{i}\right) ,\phi \left( e_{j}\right) \right) $ then $\phi \theta \in {%
\mathcal{Z}}\left( {\mathcal{E\times E}},V\right) $ if and only if $\theta
\left( \phi \left( e_{i}\right) ,\phi \left( e_{j}\right) \right) =0$ for
any $i\neq j$. Consider the matrix representation and assume that $\theta =%
\overset{m}{\underset{i=1}{\sum }}c_{ii}\delta _{e_{i},e_{i}}\in {\mathcal{Z}%
}\left( {\mathcal{E\times E}},\mathbb{F}\right) $. Then $\theta $ can be
represented by an $m\times m$ diagonal matrix $\big(c_{ii}\big)$, and then $%
\phi \theta =\phi ^{t}\big(c_{ii}\big)\phi $. So, $\phi \theta \in {\mathcal{%
Z}}\left( {\mathcal{E\times E}},V\right) $ if and only if the matrix
represents $\phi \theta $\ is diagonal. Define a subgroup $\mathcal{S}%
_{\theta }\left( {\mathcal{E}}\right) $ of $Aut\left( {\mathcal{E}}\right) $
by 
\begin{equation*}
\mathcal{S}_{\theta }\left( {\mathcal{E}}\right) =\left\{ \phi \in Aut\left( 
{\mathcal{E}}\right) :\theta \left( \phi \left( e_{i}\right) ,\phi \left(
e_{j}\right) \right) =0\text{ \ }\forall i\neq j\right\} .
\end{equation*}%
Then, for each $\phi \in \mathcal{S}_{\theta }\left( {\mathcal{E}}\right) $, 
$\phi \theta \in {\mathcal{Z}}\left( {\mathcal{E\times E}},V\right) $ and
therefore ${\mathcal{E}}_{\phi \theta }$ is an evolution algebra. Further, $%
\phi \theta \in {\mathcal{B}}\left( {\mathcal{E\times E}},V\right) $ if and
only if $\theta \in {\mathcal{B}}\left( {\mathcal{E\times E}},V\right) $ and
hence $\left[ \phi \theta \right] $ is a well-defined element of ${\mathcal{H%
}}\left( {\mathcal{E\times E}},V\right) $.

\begin{lem}
\label{iso1}Let $\theta \in {\mathcal{Z}}\left( {\mathcal{E\times E}}%
,V\right) $ and $\phi \in \mathcal{S}_{\theta }\left( {\mathcal{E}}\right) $%
. Then ${\mathcal{E}}_{\phi \theta }\cong {\mathcal{E}}_{\theta }$.
\end{lem}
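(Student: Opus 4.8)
The plan is to produce an explicit isomorphism by extending $\phi$ to the whole extension. Since both $\mathcal{E}_{\theta}$ and $\mathcal{E}_{\phi\theta}$ have the same underlying vector space $\mathcal{E}\oplus V$, I would define the linear map $\Phi:\mathcal{E}_{\phi\theta}\to\mathcal{E}_{\theta}$ by $\Phi|_{\mathcal{E}}=\phi$ and $\Phi|_{V}=\mathrm{id}_{V}$, i.e. $\Phi(x+v)=\phi(x)+v$ for $x\in\mathcal{E}$ and $v\in V$. Because $\phi$ is an automorphism of $\mathcal{E}$ (hence bijective) and $\mathrm{id}_{V}$ is bijective, $\Phi$ is at once a linear bijection that carries $\mathcal{E}$ onto $\mathcal{E}$ and $V$ onto $V$; the entire content of the lemma therefore reduces to verifying that $\Phi$ is multiplicative.

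First I would record the bilinearly-extended formulas for the two products. Extending the defining relations, for all $x,y\in\mathcal{E}$ one has $x\star_{\theta}y=xy+\theta(x,y)$ and $x\star_{\phi\theta}y=xy+(\phi\theta)(x,y)=xy+\theta(\phi(x),\phi(y))$, where $xy$ denotes the product in $\mathcal{E}$; moreover any product having an argument in $V$ vanishes in either algebra. Note that $\phi\theta\in\mathcal{Z}(\mathcal{E}\times\mathcal{E},V)$ precisely because $\phi\in\mathcal{S}_{\theta}(\mathcal{E})$ (this is exactly what makes $\mathcal{E}_{\phi\theta}$ a legitimate evolution algebra), so the second formula is valid.

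The main verification is then a short computation on basis vectors. For $1\le i,j\le m$ I would compute $\Phi(e_{i}\star_{\phi\theta}e_{j})=\phi(e_{i}e_{j})+\theta(\phi(e_{i}),\phi(e_{j}))$, using that $e_{i}e_{j}\in\mathcal{E}$ while $\theta(\phi(e_{i}),\phi(e_{j}))\in V$ is fixed by $\Phi$. On the other side, $\Phi(e_{i})\star_{\theta}\Phi(e_{j})=\phi(e_{i})\star_{\theta}\phi(e_{j})=\phi(e_{i})\phi(e_{j})+\theta(\phi(e_{i}),\phi(e_{j}))$, and since $\phi$ is an algebra automorphism, $\phi(e_{i})\phi(e_{j})=\phi(e_{i}e_{j})$. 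The two expressions coincide. For the remaining cases, where at least one index exceeds $m$, both $e_{i}\star_{\phi\theta}e_{j}$ and $\Phi(e_{i})\star_{\theta}\Phi(e_{j})$ are zero; the latter because $\phi(e_{j})\in\mathcal{E}=\mathrm{span}\{e_{1},\dots,e_{m}\}$ while the argument coming from $V$ annihilates everything in $\mathcal{E}_{\theta}$.

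The argument is essentially routine, and I expect no genuine obstacle; the only point demanding care is to work with the bilinearly-extended products rather than merely the basis relations, since $\phi(e_{i})$ is in general a linear combination of all the $e_{k}$. Once the identity $x\star_{\theta}y=xy+\theta(x,y)$ is in hand, the decisive use of the automorphism property $\phi(e_{i})\phi(e_{j})=\phi(e_{i}e_{j})$ bridges $\star_{\theta}$ on the $\phi$-image and $\star_{\phi\theta}$ on the original basis. Hence $\Phi$ is a bijective algebra homomorphism, yielding $\mathcal{E}_{\phi\theta}\cong\mathcal{E}_{\theta}$.
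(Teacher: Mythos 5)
Your proposal is correct and coincides with the paper's own argument: the paper likewise defines $\sigma(x+v)=\phi(x)+v$ and verifies multiplicativity via $\sigma((x+v)(y+w))=\phi(xy)+\theta(\phi(x),\phi(y))=\sigma(x+v)\sigma(y+w)$. The only cosmetic difference is that you carry out the check on basis vectors and then invoke bilinearity, while the paper computes directly on arbitrary elements.
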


\begin{proof}
Define a linear map $\sigma :{\mathcal{E}}_{\phi \theta }\longrightarrow {%
\mathcal{E}}_{\theta }$ by $\sigma (x+v)=\phi \left( x\right) +v$ for $x\in {%
\mathcal{E}}$ and $v\in V$. Then $\sigma $ is a bijective map. Moreover,%
\begin{eqnarray*}
\sigma \left( (x+v)(y+w)\right) &=&\sigma \left( xy+\phi \theta \left(
x,y\right) \right) \\
&=&\phi \left( xy\right) +\phi \theta \left( x,y\right) \\
&=&\phi \left( x\right) \phi \left( y\right) +\theta (\phi \left( x\right)
,\phi \left( y\right) )) \\
&=&\sigma \left( x+v\right) \sigma \left( y+w\right)
\end{eqnarray*}%
for any $x,y\in {\mathcal{E}}$ and $v,w\in V$. Hence $\sigma $ is an
isomorphism.
\end{proof}

Let $GL\left( V\right) $ be the set of bijective linear maps $%
V\longrightarrow V$. Let $\psi \in GL\left( V\right) $. For $\theta \in {%
\mathcal{Z}}\left( {\mathcal{E\times E}},V\right) $ define $\psi \theta
\left( e_{i},e_{j}\right) =\psi \left( \theta \left( e_{i},e_{j}\right)
\right) $. Then $\psi \theta \in {\mathcal{Z}}\left( {\mathcal{E\times E}}%
,V\right) $. So, $GL\left( V\right) $ acts on ${\mathcal{Z}}\left( {\mathcal{%
E\times E}},V\right) $. Also, $\theta \in {\mathcal{B}}\left( {\mathcal{%
E\times E}},V\right) $ if and only if $\psi \theta \in {\mathcal{B}}\left( {%
\mathcal{E\times E}},V\right) $. Hence $GL\left( V\right) $ acts on ${%
\mathcal{H}}\left( {\mathcal{E\times E}},V\right) $.

\begin{lem}
\label{iso2}Let $\theta \in {\mathcal{Z}}\left( {\mathcal{E\times E}}%
,V\right) $ and $\psi \in GL(V)$. Then ${\mathcal{E}}_{\theta }\cong {%
\mathcal{E}}_{\psi \theta }$.
\end{lem}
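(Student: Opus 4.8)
The plan is to imitate the proof of Lemma \ref{iso1}, but now carrying the twist inside the summand $V$ rather than $\mathcal{E}$. First I would define a linear map $\sigma :\mathcal{E}_{\theta }\longrightarrow \mathcal{E}_{\psi \theta }$ by $\sigma \left( x+v\right) =x+\psi \left( v\right) $ for $x\in \mathcal{E}$ and $v\in V$. Since $\psi \in GL\left( V\right) $ is bijective and $\sigma $ restricts to the identity on the complement $\mathcal{E}$, the map $\sigma $ is a linear bijection, with inverse $x+v\mapsto x+\psi ^{-1}\left( v\right) $.

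Next I would verify that $\sigma $ is multiplicative. Recall that in both $\mathcal{E}_{\theta }$ and $\mathcal{E}_{\psi \theta }$ the subspace $V$ lies in the annihilator, so for $x,y\in \mathcal{E}$ and $v,w\in V$ the product in $\mathcal{E}_{\theta }$ collapses to $\left( x+v\right) \left( y+w\right) =xy+\theta \left( x,y\right) $, where $xy$ denotes the product computed in $\mathcal{E}$. Applying $\sigma $ then gives $xy+\psi \left( \theta \left( x,y\right) \right) $. On the other hand, in $\mathcal{E}_{\psi \theta }$ one has $\sigma \left( x+v\right) \sigma \left( y+w\right) =\left( x+\psi \left( v\right) \right) \left( y+\psi \left( w\right) \right) =xy+\left( \psi \theta \right) \left( x,y\right) $. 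Since by the definition of the $GL\left( V\right) $-action $\left( \psi \theta \right) \left( x,y\right) =\psi \left( \theta \left( x,y\right) \right) $, the two expressions coincide, so $\sigma $ is a homomorphism; being a bijective homomorphism, it is an isomorphism.

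I do not expect any genuine obstacle. The only substantive point is that $V$ is an annihilator in both algebras, so every cross term and every $V$-component of the arguments drops out of the products, leaving only the $\theta $-term, on which the linearity of $\psi $ together with the definition $\psi \theta \left( e_{i},e_{j}\right) =\psi \left( \theta \left( e_{i},e_{j}\right) \right) $ does exactly what is required. The whole argument is parallel to Lemma \ref{iso1}, with $\psi $ acting on $V$ in the role that $\phi $ played on $\mathcal{E}$.
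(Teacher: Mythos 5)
Your proposal is correct and coincides with the paper's own proof: the same map $\sigma(x+v)=x+\psi(v)$, the same bijectivity observation, and the same computation $\sigma((x+v)(y+w))=xy+\psi(\theta(x,y))=\sigma(x+v)\sigma(y+w)$. No differences worth noting.
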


\begin{proof}
Define a linear map $\sigma :{\mathcal{E}}_{\theta }\longrightarrow {%
\mathcal{E}}_{\psi \theta }$ by $\sigma (x+v)=x+\psi \left( v\right) $ for $%
x\in {\mathcal{E}}$ and $v\in V$. Then $\sigma $ is a bijective map.
Moreover,%
\begin{equation*}
\sigma \left( (x+v)(y+w)\right) =\sigma \left( xy+\theta \left( x,y\right)
\right) =xy+\psi \left( \theta \left( x,y\right) \right) =\sigma \left(
x+v\right) \sigma \left( y+w\right) 
\end{equation*}%
for any $x,y\in {\mathcal{E}}$ and $v,w\in V$. Hence $\sigma $ is an
isomorphism.
\end{proof}

\begin{lem}
\label{iso3}Let $\theta ,\vartheta \in {\mathcal{Z}}\left( {\mathcal{E\times
E}},V\right) $. If there exist a map $\phi \in Aut\left( {\mathcal{E}}%
\right) $ and a map $\psi \in GL(V)$ such that $\left[ \phi \theta \right] =%
\left[ \psi \vartheta \right] $, then ${\mathcal{E}}_{\theta }\cong {%
\mathcal{E}}_{\vartheta }$.
\end{lem}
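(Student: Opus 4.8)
The plan is to prove this by chaining together the three isomorphism results already established, rather than by constructing a new map from scratch. The statement is purely a transitivity-of-isomorphism assertion once one realizes that each of the operations $\theta \mapsto \phi\theta$ (for $\phi \in \mathcal{S}_\theta(\mathcal{E})$), $\theta \mapsto \psi\theta$ (for $\psi \in GL(V)$), and passage to a cohomologous cocycle all preserve the isomorphism class of the extension. So the whole argument reduces to composing the isomorphisms these lemmas provide.

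First I would observe that the hypothesis $\left[\phi\theta\right] = \left[\psi\vartheta\right]$ only makes sense if both $\phi\theta$ and $\psi\vartheta$ lie in ${\mathcal{Z}}\left({\mathcal{E\times E}},V\right)$, since the equivalence classes are taken in ${\mathcal{H}}\left({\mathcal{E\times E}},V\right)$. For $\psi\vartheta$ this is automatic from the remark preceding Lemma \ref{iso2}, where we noted that $GL(V)$ acts on ${\mathcal{Z}}\left({\mathcal{E\times E}},V\right)$. For $\phi\theta$ it forces $\theta\left(\phi\left(e_{i}\right),\phi\left(e_{j}\right)\right) = 0$ for all $i\neq j$, which is precisely the condition $\phi \in \mathcal{S}_\theta(\mathcal{E})$. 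Spelling this out is the one point that needs care, because $\phi$ is given only as an element of $Aut(\mathcal{E})$; the well-definedness of $\left[\phi\theta\right]$ is what upgrades it to membership in $\mathcal{S}_\theta(\mathcal{E})$, so that Lemma \ref{iso1} becomes applicable.

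With that in place I would assemble the chain of isomorphisms as follows. By Lemma \ref{iso1}, applied to $\phi \in \mathcal{S}_\theta(\mathcal{E})$, we get ${\mathcal{E}}_{\phi\theta} \cong {\mathcal{E}}_{\theta}$. By Lemma \ref{equivalence class}(2), applied to the pair of cocycles $\phi\theta$ and $\psi\vartheta$ which satisfy $\left[\phi\theta\right] = \left[\psi\vartheta\right]$, we get ${\mathcal{E}}_{\phi\theta} \cong {\mathcal{E}}_{\psi\vartheta}$. Finally, by Lemma \ref{iso2}, applied to $\psi \in GL(V)$ and the cocycle $\vartheta$, we get ${\mathcal{E}}_{\vartheta} \cong {\mathcal{E}}_{\psi\vartheta}$. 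Composing these three isomorphisms (and using that isomorphism is transitive and symmetric) yields
\begin{equation*}
{\mathcal{E}}_{\theta} \cong {\mathcal{E}}_{\phi\theta} \cong {\mathcal{E}}_{\psi\vartheta} \cong {\mathcal{E}}_{\vartheta},
\end{equation*}
which is exactly the desired conclusion.

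I do not expect any genuine obstacle here, since no new computation is required and the supporting lemmas do all the structural work. The only thing to be vigilant about is the bookkeeping of which cocycle plays which role in each invocation, and the implicit promotion of $\phi$ from $Aut(\mathcal{E})$ to $\mathcal{S}_\theta(\mathcal{E})$ noted above; getting the direction of each isomorphism right (so that the middle terms ${\mathcal{E}}_{\phi\theta}$ and ${\mathcal{E}}_{\psi\vartheta}$ cancel correctly through the chain) is the entirety of the care needed.
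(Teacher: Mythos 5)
Your proposal is correct and follows the paper's proof essentially verbatim: the paper likewise notes that the hypothesis forces $\phi \in \mathcal{S}_{\theta }\left( {\mathcal{E}}\right) $, applies Lemma \ref{equivalence class} to get ${\mathcal{E}}_{\phi \theta }\cong {\mathcal{E}}_{\psi \vartheta }$, and then chains Lemmas \ref{iso1} and \ref{iso2} to conclude ${\mathcal{E}}_{\theta }\cong {\mathcal{E}}_{\phi \theta }\cong {\mathcal{E}}_{\psi \vartheta }\cong {\mathcal{E}}_{\vartheta }$. No differences worth noting.
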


\begin{proof}
Let $\phi \in Aut\left( {\mathcal{E}}\right) $ and $\psi \in GL(V)$ such
that $\left[ \phi \theta \right] =\left[ \psi \vartheta \right] $ (i.e., $%
\phi \in \mathcal{S}_{\theta }\left( {\mathcal{E}}\right) $). Then, by Lemma %
\ref{equivalence class}, ${\mathcal{E}}_{\phi \theta }\cong {\mathcal{E}}%
_{\psi \vartheta }$. Moreover, by Lemma \ref{iso1} and Lemma \ref{iso2}, ${%
\mathcal{E}}_{\theta }\cong {\mathcal{E}}_{\phi \theta }\cong {\mathcal{E}}%
_{\psi \vartheta }\cong {\mathcal{E}}_{\vartheta }$.
\end{proof}

\begin{lem}
\emph{(See \cite[Lemma 14]{Hegazi}.)}\label{same orbit} Let ${\mathcal{E}}$
be an evolution algebra with a natural basis $B_{{\mathcal{E}}}=\left\{
e_{1},\ldots ,e_{m}\right\} $, $V$ be a vector space with a basis $%
B_{V}=\left\{ e_{m+1},\ldots ,e_{n}\right\} $, and $\theta ,\vartheta \in {%
\mathcal{Z}}\left( {\mathcal{E\times E}},V\right) $ with $\theta ^{\perp
}\cap ann\left( {\mathcal{E}}\right) =\vartheta ^{\perp }\cap ann\left( {%
\mathcal{E}}\right) =0$. Suppose that $\theta \left( e_{i},e_{i}\right) =%
\underset{j=1}{\overset{n-m}{\sum }}\theta _{j}\left( e_{i},e_{i}\right)
e_{m+j}$ and $\vartheta \left( e_{i},e_{i}\right) =\underset{j=1}{\overset{%
n-m}{\sum }}\vartheta _{j}\left( e_{i},e_{i}\right) e_{m+j}$. Then ${%
\mathcal{E}}_{\theta }\cong {\mathcal{E}}_{\vartheta }$ if and only if there
is a $\phi \in Aut\left( {\mathcal{E}}\right) $ (more precisely, $\phi \in 
\mathcal{S}_{\vartheta }\left( {\mathcal{E}}\right) $) such that the $\left[
\phi \vartheta _{i}\right] $ span the same subspace of ${\mathcal{H}}\left( {%
\mathcal{E\times E}},\mathbb{F}\right) $ as the $\left[ \theta _{i}\right] $.
\end{lem}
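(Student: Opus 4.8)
The plan is to prove both directions of this biconditional by leveraging the isomorphism lemmas already established and the correspondence between subspaces of $\mathcal{H}\left( \mathcal{E\times E},\mathbb{F}\right)$ and annihilator extensions. The key observation is that the data determining $\mathcal{E}_{\theta}$ up to isomorphism is not the individual maps $\theta_{1},\ldots,\theta_{n-m}$ but rather the subspace of $\mathcal{H}\left( \mathcal{E\times E},\mathbb{F}\right)$ that their classes $\left[ \theta_{i}\right]$ span. The action of $GL(V)$ on $\theta$ amounts to taking linear combinations of the components $\theta_{j}$ (since changing basis in $V$ mixes the coefficient forms), so by Lemma \ref{iso2} replacing $\theta$ by $\psi\theta$ does not change the isomorphism type and replaces the spanning set $\left[ \theta_{i}\right]$ by another spanning set of the \emph{same} subspace. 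This is the bridge that lets me translate the geometric condition about spanning the same subspace into an actual algebra isomorphism.

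For the direction assuming the spanning condition, I would suppose there is $\phi \in \mathcal{S}_{\vartheta}\left( \mathcal{E}\right)$ such that $\left\{ \left[ \phi\vartheta_{i}\right] \right\}$ and $\left\{ \left[ \theta_{i}\right] \right\}$ span the same subspace of $\mathcal{H}\left( \mathcal{E\times E},\mathbb{F}\right)$. Since these are two spanning sets of one subspace (and the radical conditions $\theta^{\perp}\cap ann(\mathcal{E}) = \vartheta^{\perp}\cap ann(\mathcal{E}) = 0$ force both families to be independent, so both have cardinality $n-m$), there is an invertible linear change of coordinates expressing the $\left[ \phi\vartheta_{i}\right]$ in terms of the $\left[ \theta_{i}\right]$. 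I would encode this change of coordinates as a $\psi \in GL(V)$, so that $\left[ \psi\vartheta\right]$ and $\left[ \phi\theta\right]$ have matching components, hence $\left[ \phi\theta\right] = \left[ \psi\vartheta\right]$ after reindexing; then Lemma \ref{iso3} gives $\mathcal{E}_{\theta}\cong\mathcal{E}_{\vartheta}$ directly. I must be careful that $\phi$ is applied to the correct map so that the element of $\mathcal{S}$ lands in the right stabilizer, which is why the statement specifies $\phi \in \mathcal{S}_{\vartheta}\left( \mathcal{E}\right)$.

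For the converse, I would start from an isomorphism $\sigma:\mathcal{E}_{\theta}\longrightarrow\mathcal{E}_{\vartheta}$. Because $ann(\mathcal{E}_{\theta}) = V$ and $ann(\mathcal{E}_{\vartheta}) = V$ by Lemma \ref{Ann(J+V)} together with the vanishing radical conditions, $\sigma$ must carry $V$ to $V$, so it descends to an isomorphism of the quotients $\mathcal{E}_{\theta}/V \cong \mathcal{E}_{\vartheta}/V$, both of which are identified with $\mathcal{E}$. This induced map is the sought automorphism $\phi \in Aut\left( \mathcal{E}\right)$, and the restriction of $\sigma$ to $V$ is a $\psi \in GL(V)$. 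Comparing the multiplication tables under $\sigma$ then forces the relation $\psi\circ\theta = (\phi\vartheta)\circ\sigma$ at the level of cocycles, which after passing to $\mathcal{H}$ and projecting onto coordinates says precisely that the $\left[ \phi\vartheta_{i}\right]$ and the $\left[ \theta_{i}\right]$ span the same subspace.

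The main obstacle I anticipate is the bookkeeping in the converse: extracting a \emph{natural-basis-preserving} automorphism $\phi$ from an arbitrary isomorphism $\sigma$, and verifying that the components transform correctly so that spans coincide rather than merely that some weaker relation holds. In particular I must confirm that $\phi$ lands in $\mathcal{S}_{\vartheta}\left( \mathcal{E}\right)$ (so that $\phi\vartheta$ remains a valid element of $\mathcal{Z}$ with diagonal matrix form) and that the induced $\psi$ genuinely realizes the base change between the two spanning sets. The forward direction, by contrast, is essentially an assembly of Lemmas \ref{iso1}, \ref{iso2}, and \ref{iso3} once the change-of-basis matrix $\psi$ is named explicitly.
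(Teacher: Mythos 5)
Note first that the paper itself gives no proof of this lemma; it is quoted verbatim from \cite{Hegazi} (Lemma 14), so your proposal can only be measured against the intended argument. Your strategy is the right one and is essentially the standard proof: the forward direction is an assembly of Lemmas \ref{iso1}, \ref{iso2}, \ref{iso3} once the change of spanning sets is encoded as a $\psi\in GL(V)$, and the converse uses Lemma \ref{Ann(J+V)} (with the radical hypotheses) to get $\sigma(V)=V$, hence an induced $\phi$ on ${\mathcal{E}}\cong{\mathcal{E}}_{\theta}/V$ and $\psi=\sigma|_{V}\in GL(V)$. Two steps, however, are stated incorrectly or left unexecuted. First, the claim that $\theta^{\perp}\cap ann({\mathcal{E}})=0$ forces the $\left[\theta_{i}\right]$ to be linearly independent is false: for ${\mathcal{E}}={\mathcal{E}}_{2,2}$ ($e_{1}^{2}=e_{2}$) and $n-m=2$, take $\theta_{1}=\delta_{e_{2},e_{2}}$, $\theta_{2}=\delta_{e_{1},e_{1}}$; then $\theta^{\perp}=\theta_{1}^{\perp}\cap\theta_{2}^{\perp}=0$, yet $\left[\theta_{2}\right]=0$ in ${\mathcal{H}}$. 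Independence is controlled by the annihilator-component lemma, not by the radical condition. The argument survives anyway, because any two spanning lists of the same length $n-m$ of a common subspace $W$ are related by an invertible $(n-m)\times(n-m)$ matrix (compare the two surjections $\mathbb{F}^{n-m}\rightarrow W$, whose kernels have equal dimension), so the required $\psi$ still exists; but you should justify it this way rather than via independence.

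Second, in the converse the identity you need is not $\psi\circ\theta=(\phi\vartheta)\circ\sigma$ but $\phi\vartheta=\psi\theta+\delta f$, where $f=Q\circ\sigma|_{{\mathcal{E}}}$ is the $V$-component of $\sigma$ on the complement ${\mathcal{E}}$. Writing $\sigma(e_{i})=\phi(e_{i})+f(e_{i})$ and splitting $\sigma(e_{i}\star e_{j})=\sigma(e_{i})\star\sigma(e_{j})$ into its ${\mathcal{E}}$- and $V$-components gives, for $i\neq j$, $\phi(e_{i})\phi(e_{j})=0$ and $\vartheta\left(\phi(e_{i}),\phi(e_{j})\right)=0$ --- the latter is exactly the membership $\phi\in\mathcal{S}_{\vartheta}\left({\mathcal{E}}\right)$ that you flagged as an obstacle but did not resolve --- and, for $i=j$, $\vartheta\left(\phi(e_{i}),\phi(e_{i})\right)=\psi\left(\theta(e_{i},e_{i})\right)+f(e_{i}^{2})$. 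The coboundary term $\delta f$ is not optional bookkeeping: it is the reason the conclusion concerns the spans of the \emph{classes} in ${\mathcal{H}}\left({\mathcal{E\times E}},\mathbb{F}\right)$ rather than of the cocycles themselves, and without it the statement would be false. With these two repairs your outline closes up into a complete proof.
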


\begin{defn}
\label{annihilator component}Let ${\mathcal{E}}$ be an evolution algebra
with a natural basis $B_{{\mathcal{E}}}=\left\{ e_{1},\ldots ,e_{m}\right\} $%
. If ${\mathcal{E}}=\left\langle
e_{1},..,e_{r-1},e_{r+1},..,e_{m}\right\rangle \oplus \mathbb{F}e_{r}$ is
the direct sum of two nilpotent ideals, then $\mathbb{F}e_{r}$ is called an 
\emph{annihilator component} of ${\mathcal{E}}$.
\end{defn}

\begin{lem}
\emph{(See \cite[Lemma 16]{Hegazi}.)} Let ${\mathcal{E}}$ be an evolution
algebra with a natural basis $B_{{\mathcal{E}}}=\left\{ e_{1},\ldots
,e_{m}\right\} $, $V$ be a vector space with a basis $B_{V}=\left\{
e_{m+1},\ldots ,e_{n}\right\} $, and $\theta \in {\mathcal{Z}}\left( {%
\mathcal{E\times E}},V\right) $. Suppose that $\theta \left(
e_{i},e_{i}\right) =\underset{j=1}{\overset{n-m}{\sum }}\theta _{j}\left(
e_{i},e_{i}\right) e_{m+j}$ and $\theta ^{\bot }\cap ann\left( {\mathcal{E}}%
\right) =0$. Then ${\mathcal{E}}_{\theta }$ has an annihilator component if
and only if$\mathcal{\ }\left[ \theta _{1}\right] ,\left[ \theta _{2}\right]
,\ldots ,\left[ \theta _{s}\right] $ are linearly dependent in ${\mathcal{H}}%
\left( {\mathcal{E\times E}},\mathbb{F}\right) $.
\end{lem}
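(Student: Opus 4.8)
The plan is to prove both directions of the equivalence by translating the notion of an annihilator component into the language of the cocycles $\theta_j$ and the cohomology classes $[\theta_j]$. First I would unwind the definition: $\mathcal{E}_\theta$ has an annihilator component means that, after possibly changing the natural basis, there is a basis vector $e_r$ with $e_r^2=0$ (so $\mathbb{F}e_r \subseteq ann(\mathcal{E}_\theta)$) that splits off as a direct summand of nilpotent ideals. By Lemma \ref{Ann(J+V)} we already know $ann(\mathcal{E}_\theta)=\left(\theta^\bot\cap ann(\mathcal{E})\right)\oplus V$, and the hypothesis $\theta^\bot\cap ann(\mathcal{E})=0$ forces $ann(\mathcal{E}_\theta)=V=\langle e_{m+1},\dots,e_n\rangle$. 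Thus an annihilator component, if it exists, must live inside $V$, and the question becomes whether some one-dimensional subspace of $V$ can be split off as a direct summand.

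Next I would make the connection to linear dependence precise. The key observation is that $\mathcal{E}_\theta$ decomposes with an annihilator component $\mathbb{F}w$ (for some $w\in V$) if and only if $w$ lies outside the image $\theta(\mathcal{E},\mathcal{E})=\mathrm{span}\{\theta(e_i,e_i):1\le i\le m\}$, after an appropriate choice of complement. Writing each $\theta(e_i,e_i)=\sum_{j=1}^{n-m}\theta_j(e_i,e_i)e_{m+j}$, the span of the values $\theta(e_i,e_i)$ inside $V$ has dimension equal to the rank of the matrix whose rows record the scalars $\theta_j(e_i,e_i)$. This image fails to be all of $V$ exactly when that rank is less than $n-m=\dim V$. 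The plan is therefore to show that the image $\theta(\mathcal{E},\mathcal{E})$ equals $V$ precisely when the classes $[\theta_1],\dots,[\theta_s]$ (where $s=n-m$) are linearly independent in $\mathcal{H}(\mathcal{E}\times\mathcal{E},\mathbb{F})$, and that a proper inclusion yields a complementary vector $w$ spanning an annihilator component.

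For the forward direction I would assume $[\theta_1],\dots,[\theta_s]$ are linearly dependent, produce a nontrivial combination $\sum_j \lambda_j[\theta_j]=0$, i.e. $\sum_j\lambda_j\theta_j=\delta f\in\mathcal{B}(\mathcal{E}\times\mathcal{E},\mathbb{F})$, and use this to exhibit a change of basis in $V$ (an element of $GL(V)$, combined via Lemma \ref{iso2}, \ref{iso1}, and \ref{iso3} with an automorphism absorbing the coboundary $\delta f$) that sends one of the $e_{m+j}$ to a genuine annihilator component — a basis vector appearing in no product and hence splitting off as $\mathbb{F}e_r$. Conversely, if $\mathcal{E}_\theta$ has an annihilator component, I would reverse this: the splitting exhibits a vector in $V\setminus\theta(\mathcal{E},\mathcal{E})$ (modulo coboundaries), forcing the rank of $\{[\theta_j]\}$ to drop and hence linear dependence. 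The main obstacle I anticipate is the bookkeeping in the forward direction: one must carefully use the coboundary $\delta f$ to adjust the multiplication so that the dependent combination really detaches as a $\mathbb{F}e_r$ summand of nilpotent ideals, rather than merely lying in the annihilator. This is where Lemma \ref{same orbit} and the $GL(V)$-action are essential, since they let me replace $\theta$ by an equivalent cocycle $\vartheta$ whose matrix of coefficients is in a normal form (e.g. reduced so that one row is zero), at which point the annihilator component is visibly one of the basis vectors.
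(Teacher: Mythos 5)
This lemma is quoted from \cite[Lemma 16]{Hegazi} and the present paper supplies no proof of it, so there is no in-paper argument to measure you against; judged on its own terms, your outline follows the standard route for ``no annihilator component'' criteria and can be completed, but one pivot of your plan is false as stated and must be repaired. You claim that the image $\theta \left( {\mathcal{E}},{\mathcal{E}}\right) $ equals $V$ precisely when the classes $\left[ \theta _{1}\right] ,\ldots ,\left[ \theta _{s}\right] $ are linearly independent in ${\mathcal{H}}\left( {\mathcal{E\times E}},\mathbb{F}\right) $. In fact surjectivity of $\theta $ onto $V$ is equivalent to independence of the cocycles $\theta _{j}$ in ${\mathcal{Z}}\left( {\mathcal{E\times E}},\mathbb{F}\right) $, which is strictly stronger than independence of their classes. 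For example, over ${\mathcal{E}}_{2,2}$ take $\theta _{1}=\delta _{e_{2},e_{2}}$ and $\theta _{2}=\delta _{e_{1},e_{1}}$: the radical condition holds, the image of $\theta $ is all of $V$, yet $\left[ \theta _{2}\right] =0$ so the classes are dependent, and ${\mathcal{E}}_{\theta }$ (with $e_{1}^{2}=e_{2}+e_{4}$, $e_{2}^{2}=e_{3}$) does have an annihilator component, visible after replacing $e_{2}$ by $e_{2}+e_{4}$. So ``$w$ lies outside $\theta \left( {\mathcal{E}},{\mathcal{E}}\right) $'' is not an isomorphism-invariant criterion and cannot carry the equivalence; the coboundary adjustment you describe in your last paragraph is not bookkeeping but the entire content of the forward direction.

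With that understood, both directions close cleanly. Forward: from $\sum_{j}\lambda _{j}\theta _{j}=\delta f$ with some $\lambda _{j}\neq 0$, use the $GL(V)$ action (Lemma \ref{iso2}) to pass to an equivalent cocycle whose last component is $\delta f$, then subtract the coboundary $x\mapsto f\left( x\right) e_{n}$ (Lemma \ref{equivalence class}); in the resulting isomorphic algebra $e_{n}$ occurs in no product and $\mathbb{F}e_{n}$ splits off. Converse: this is simpler than your ``reverse this, modulo coboundaries'' suggests and needs no coboundaries at all. If the $\left[ \theta _{j}\right] $ are independent in ${\mathcal{H}}$ then the $\theta _{j}$ are independent in ${\mathcal{Z}}$, hence $V=\theta \left( {\mathcal{E}},{\mathcal{E}}\right) \subseteq {\mathcal{E}}_{\theta }^{\left\langle 2\right\rangle }$; any annihilator component $\mathbb{F}w$ has $w\in ann\left( {\mathcal{E}}_{\theta }\right) =V$ by Lemma \ref{Ann(J+V)} while its complementary ideal $I$ contains ${\mathcal{E}}_{\theta }^{\left\langle 2\right\rangle }\supseteq V$, forcing $w\in I\cap \mathbb{F}w=0$, a contradiction. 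Once you replace the false equivalence by this two-sided argument, your proposal is a correct proof.
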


\begin{lem}
Let $\phi \in Aut\left( {\mathcal{E}}\right) $ and $\theta \in {\mathcal{Z}}%
\left( {\mathcal{E\times E}},\mathbb{F}\right) $. Then $\dim \left( \phi
\theta \right) ^{\bot }=\dim \theta ^{\bot }$.
\end{lem}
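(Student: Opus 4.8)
The plan is to identify the radical of $\phi\theta$ explicitly as the preimage of $\theta^{\bot}$ under $\phi$, and then invoke the fact that an invertible linear map preserves dimension.

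First I would unwind the definitions. By definition, $x\in(\phi\theta)^{\bot}$ means $\phi\theta(x,y)=0$ for every $y\in{\mathcal{E}}$, which by the definition of $\phi\theta$ reads $\theta(\phi(x),\phi(y))=0$ for all $y$. Since $\phi\in Aut({\mathcal{E}})$ is in particular a bijective linear map, the element $\phi(y)$ ranges over all of ${\mathcal{E}}$ as $y$ does; hence the condition is equivalent to $\theta(\phi(x),z)=0$ for all $z\in{\mathcal{E}}$, that is, $\phi(x)\in\theta^{\bot}$. This yields the key identity $(\phi\theta)^{\bot}=\phi^{-1}(\theta^{\bot})$.

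The conclusion is then immediate: $\phi$ is a vector-space automorphism of ${\mathcal{E}}$, so $\phi^{-1}$ is an invertible linear map, and therefore $\dim\phi^{-1}(\theta^{\bot})=\dim\theta^{\bot}$. Combining this with the identity above gives $\dim(\phi\theta)^{\bot}=\dim\theta^{\bot}$.

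There is essentially no analytic obstacle here; the only point requiring care is the surjectivity step, where one uses that $\phi$ is an automorphism (hence onto) in order to convert the quantifier ``for all $y$'' applied to $\phi(y)$ into ``for all $z\in{\mathcal{E}}$''. Without surjectivity one would obtain only an inclusion rather than an equality. Note also that, since $\theta$ is symmetric (being an element of ${\mathcal{Z}}\left({\mathcal{E\times E}},\mathbb{F}\right)$, a span of the $\delta_{e_i,e_i}$), there is no distinction between the left and right radicals, so the one-sided computation above suffices.
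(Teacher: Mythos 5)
Your proof is correct and follows essentially the same route as the paper: both establish that $x\in(\phi\theta)^{\bot}$ if and only if $\phi(x)\in\theta^{\bot}$ (the paper writes this as $\phi\theta(x,{\mathcal{E}})=\theta(\phi(x),{\mathcal{E}})$, implicitly using surjectivity of $\phi$ exactly where you make it explicit), and then conclude by noting that $\phi$ restricts to a linear bijection between the two radicals. Your identification $(\phi\theta)^{\bot}=\phi^{-1}(\theta^{\bot})$ is just a restatement of the paper's bijection $\sigma$.
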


\begin{proof}
Since $\phi \theta \left( x,{\mathcal{E}}\right) =\theta \left( \phi \left(
x\right) ,{\mathcal{E}}\right) $, $x\in \left( \phi \theta \right) ^{\bot }$
if and only if $\phi \left( x\right) \in \theta ^{\bot }$. Define a linear
map $\sigma :\left( \phi \theta \right) ^{\bot }\longrightarrow \theta
^{\bot }$ by $\sigma \left( x\right) =\phi \left( x\right) $. Then $\sigma $%
\ is a bijective map. So, $\dim \left( \phi \theta \right) ^{\bot }=\dim
\theta ^{\bot }$.
\end{proof}

For $\theta _{1},\theta _{2},\ldots ,\theta _{n-m}\in {\mathcal{Z}}\left( {%
\mathcal{E\times E}},\mathbb{F}\right) $, let $\Psi \left( \theta
_{1},\theta _{2},\ldots ,\theta _{n-m}\right) =\left( m_{1},m_{2},\ldots
,m_{n-m}\right) $ be the ordered descending sequence of $\dim \theta
_{1}^{\perp },\dim \theta _{2}^{\perp },...,\dim \theta _{n-m}^{\perp }$.

\begin{cor}
\label{inv}Let $\phi \in Aut\left( {\mathcal{E}}\right) $ and $\theta
_{1},\ldots ,\theta _{n-m}\in {\mathcal{Z}}\left( {\mathcal{E\times E}},%
\mathbb{F}\right) $. Then $\Psi \left( \theta _{1},\ldots ,\theta
_{n-m}\right) =\Psi \left( \phi \theta _{1},\ldots ,\phi \theta
_{n-m}\right) $.
\end{cor}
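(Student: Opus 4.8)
The plan is to derive this as an immediate consequence of the preceding lemma, which asserts that $\dim\left(\phi\theta\right)^{\bot}=\dim\theta^{\bot}$ for a single $\theta\in{\mathcal{Z}}\left({\mathcal{E\times E}},\mathbb{F}\right)$ and $\phi\in Aut\left({\mathcal{E}}\right)$. First I would observe that the automorphism $\phi$ is fixed and common to all the $\theta_{i}$, so the preceding lemma applies verbatim to each index $i\in\left\{1,\ldots,n-m\right\}$ separately. This yields the pointwise equalities $\dim\left(\phi\theta_{i}\right)^{\bot}=\dim\theta_{i}^{\bot}$ for every $i$.

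Next I would recall that $\Psi\left(\theta_{1},\ldots,\theta_{n-m}\right)$ is, by definition, nothing more than the descending rearrangement of the list of numbers $\dim\theta_{1}^{\bot},\ldots,\dim\theta_{n-m}^{\bot}$; in particular it depends only on the unordered multiset $\left\{\dim\theta_{1}^{\bot},\ldots,\dim\theta_{n-m}^{\bot}\right\}$ and not on the ordering or on the maps themselves. Since the pointwise equalities above show that the multiset $\left\{\dim\left(\phi\theta_{i}\right)^{\bot}\right\}_{i}$ coincides with the multiset $\left\{\dim\theta_{i}^{\bot}\right\}_{i}$, their descending rearrangements are identical, which is exactly the asserted equality $\Psi\left(\theta_{1},\ldots,\theta_{n-m}\right)=\Psi\left(\phi\theta_{1},\ldots,\phi\theta_{n-m}\right)$.

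There is essentially no obstacle here: the statement is a routine corollary, and the only point requiring a word of care is the conceptual one that $\Psi$ is invariant under permuting its arguments, so that equality of the underlying multisets of $\bot$-dimensions suffices to force equality of the sorted sequences. I would make this explicit rather than leave it implicit, since it is the sole logical content beyond the direct appeal to the preceding lemma. No computation with the bilinear forms is needed, as all the analytic work has already been carried out in establishing $\dim\left(\phi\theta\right)^{\bot}=\dim\theta^{\bot}$.
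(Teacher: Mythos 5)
Your argument is correct and is exactly the intended one: the paper states this as an immediate corollary of the preceding lemma ($\dim\left(\phi\theta\right)^{\bot}=\dim\theta^{\bot}$) and gives no separate proof, since applying that lemma to each $\theta_{i}$ and noting that $\Psi$ is just the descending rearrangement of the resulting multiset of dimensions settles it. Your explicit remark that $\Psi$ depends only on the unordered multiset is the right (and only) point to make.
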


Let $\left[ \theta _{1}\right] ,\ldots ,\left[ \theta _{n-m}\right] \in {%
\mathcal{H}}\left( {\mathcal{E\times E}},\mathbb{F}\right) $. We define $%
\Psi \left( \left[ \theta _{1}\right] ,\ldots ,\left[ \theta _{n-m}\right]
\right) $ to be the least upper bound for the lexicographic order of the set%
\begin{equation*}
\left\{ \Psi \left( \vartheta _{1},\ldots ,\vartheta _{n-m}\right)
:\vartheta _{1},\ldots ,\vartheta _{n-m}\in {\mathcal{Z}}\left( {\mathcal{%
E\times E}},\mathbb{F}\right) \mbox{ and }\left\langle \left[ \theta _{1}%
\right] ,...,\left[ \theta _{n-m}\right] \right\rangle =\left\langle \left[
\vartheta _{1}\right] ,...,\left[ \vartheta _{n-m}\right] \right\rangle
\right\} .
\end{equation*}

\begin{lem}
Let ${\mathcal{E}}$ be an evolution algebra with a natural basis $B_{{%
\mathcal{E}}}=\left\{ e_{1},\ldots ,e_{m}\right\} $, $V$ be a vector space
with a basis $B_{V}=\left\{ e_{m+1},\ldots ,e_{n}\right\} $, and $\theta
,\vartheta \in {\mathcal{Z}}\left( {\mathcal{E\times E}},V\right) $ with $%
\theta ^{\perp }\cap ann\left( {\mathcal{E}}\right) =\vartheta ^{\perp }\cap
ann\left( {\mathcal{E}}\right) =0$. Suppose that $\theta (e_{l},e_{l})=%
\underset{i=1}{\overset{n-m}{\sum }}\theta _{i}(e_{l},e_{l})e_{m+i}$ and $%
\vartheta (e_{l},e_{l})=\underset{i=1}{\overset{n-m}{\sum }}\vartheta
_{i}(e_{l},e_{l})e_{m+i}$. If ${\mathcal{E}}_{\theta }\cong {\mathcal{E}}%
_{\vartheta }$ then $\Psi \left( \left\langle \left[ \theta _{1}\right]
,\ldots ,\left[ \theta _{n-m}\right] \right\rangle \right) =\Psi \left(
\left\langle \left[ \vartheta _{1}\right] ,\ldots ,\left[ \vartheta _{n-m}%
\right] \right\rangle \right) $.
\end{lem}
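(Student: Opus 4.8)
The plan is to push the whole problem onto a single automorphism and then invoke Corollary~\ref{inv}. Since $\theta^{\perp}\cap ann(\mathcal{E})=\vartheta^{\perp}\cap ann(\mathcal{E})=0$ and $\mathcal{E}_{\theta}\cong\mathcal{E}_{\vartheta}$, Lemma~\ref{same orbit} produces an automorphism $\phi\in\mathcal{S}_{\vartheta}(\mathcal{E})\subseteq Aut(\mathcal{E})$ with $\langle[\phi\vartheta_{1}],\ldots,[\phi\vartheta_{n-m}]\rangle=\langle[\theta_{1}],\ldots,[\theta_{n-m}]\rangle$ in $\mathcal{H}(\mathcal{E}\times\mathcal{E},\mathbb{F})$. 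Writing $W_{\vartheta}=\langle[\vartheta_{1}],\ldots,[\vartheta_{n-m}]\rangle$ and $W_{\theta}=\langle[\theta_{1}],\ldots,[\theta_{n-m}]\rangle$, and recalling that $\Psi$ of a subspace depends only on that subspace, it suffices to prove $\Psi(W_{\vartheta})=\Psi(W_{\theta})$.

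The first step I would establish is that $\phi$ induces a linear isomorphism $\bar{\phi}\colon W_{\vartheta}\to W_{\theta}$, $[\eta]\mapsto[\phi\eta]$. The delicate point is that $\phi$ need not preserve $\mathcal{Z}(\mathcal{E}\times\mathcal{E},\mathbb{F})$, so one has to check that $\phi\eta$ again lies in $\mathcal{Z}$ for every $\eta$ with $[\eta]\in W_{\vartheta}$. This is exactly where $\phi\in\mathcal{S}_{\vartheta}(\mathcal{E})$ enters: reading the defining condition $\vartheta(\phi e_{k},\phi e_{l})=0$ (for $k\neq l$) componentwise gives $\vartheta_{j}(\phi e_{k},\phi e_{l})=0$ for all $j$ and all $k\neq l$, that is $\phi\vartheta_{j}\in\mathcal{Z}$ for every $j$. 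Combining this with the two elementary facts $\mathcal{B}(\mathcal{E}\times\mathcal{E},\mathbb{F})\subseteq\mathcal{Z}(\mathcal{E}\times\mathcal{E},\mathbb{F})$ and $\phi(\delta f)=\delta(f\circ\phi)$ (so $\phi$ preserves $\mathcal{B}$, which also makes $\bar{\phi}$ independent of the chosen representative), any $\eta=\sum_{j}c_{j}\vartheta_{j}+\delta f$ representing a class in $W_{\vartheta}$ satisfies $\phi\eta=\sum_{j}c_{j}\,\phi\vartheta_{j}+\delta(f\circ\phi)\in\mathcal{Z}$. Since $\phi$ is invertible and the same argument applies to $\phi^{-1}$ on classes in $W_{\theta}$, the map $\bar{\phi}$ is a well-defined bijection onto $W_{\theta}$. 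I expect this verification to be the main obstacle, being the one place where the distinction between $\mathcal{Z}(\mathcal{E}\times\mathcal{E},\mathbb{F})$ and the full space of symmetric forms is essential.

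With $\bar{\phi}$ available, I would compare the two index sets appearing in the definition of $\Psi$. The assignment $(\eta_{1},\ldots,\eta_{n-m})\mapsto(\phi\eta_{1},\ldots,\phi\eta_{n-m})$ carries each tuple from $\mathcal{Z}$ whose classes span $W_{\vartheta}$ to a tuple from $\mathcal{Z}$ whose classes span $\bar{\phi}(W_{\vartheta})=W_{\theta}$, and by the preceding paragraph (applied to $\phi$ and to $\phi^{-1}$) it is a bijection between these two families of tuples. By Corollary~\ref{inv}, $\Psi(\eta_{1},\ldots,\eta_{n-m})=\Psi(\phi\eta_{1},\ldots,\phi\eta_{n-m})$ for every such tuple, so this bijection preserves the associated ordered descending sequence of radical dimensions.

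Therefore the set of sequences $\Psi(\eta_{1},\ldots,\eta_{n-m})$ ranging over tuples in $\mathcal{Z}$ whose classes span $W_{\vartheta}$ coincides with the corresponding set for $W_{\theta}$. Two equal sets have the same least upper bound in the lexicographic order, whence $\Psi(W_{\vartheta})=\Psi(W_{\theta})$; that is, $\Psi(\langle[\vartheta_{1}],\ldots,[\vartheta_{n-m}]\rangle)=\Psi(\langle[\theta_{1}],\ldots,[\theta_{n-m}]\rangle)$, as claimed.
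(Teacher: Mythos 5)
Your argument is correct and follows essentially the same route as the paper: Lemma \ref{same orbit} supplies the automorphism $\phi$, and Corollary \ref{inv} transfers the $\Psi$-values between the two families of spanning tuples; the paper phrases the final step as a pair of lexicographic inequalities obtained from maximizing tuples, while you phrase it as a bijection of the index sets, but these amount to the same argument. Your explicit check that $\phi \eta $ remains in ${\mathcal{Z}}\left( {\mathcal{E\times E}},\mathbb{F}\right) $ (via $\phi \in \mathcal{S}_{\vartheta }\left( {\mathcal{E}}\right) $ and $\phi \left( \delta f\right) =\delta \left( f\circ \phi \right) $) addresses a point the paper leaves implicit, and it is handled correctly.
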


\begin{proof}
Let ${\mathcal{E}}_{\theta }\cong {\mathcal{E}}_{\vartheta }$. Then by Lemma %
\ref{same orbit}, there exists $\phi \in Aut\left( {\mathcal{E}}\right) $
such that $\left\langle \left[ \phi \vartheta _{1}\right] ,\ldots ,\left[
\phi \vartheta _{n-m}\right] \right\rangle =\left\langle \left[ \theta _{1}%
\right] ,\ldots ,\left[ \theta _{n-m}\right] \right\rangle $. Then $\Psi
\left( \left\langle \left[ \phi \vartheta _{1}\right] ,\ldots ,\left[ \phi
\vartheta _{n-m}\right] \right\rangle \right) =\Psi \left( \left\langle %
\left[ \theta _{1}\right] ,\ldots ,\left[ \theta _{n-m}\right] \right\rangle
\right) $. Further, let $\Psi \left( \left[ \phi \vartheta _{1}\right]
,\ldots ,\left[ \phi \vartheta _{n-m}\right] \right) =\Psi \left( \eta
_{1},\ldots ,\eta _{n-m}\right) $ for some $\eta _{1},\ldots ,\eta _{n-m}\in 
{\mathcal{Z}}\left( {\mathcal{E\times E}},\mathbb{F}\right) $. Then $%
\left\langle \left[ \phi \vartheta _{1}\right] ,\ldots ,\left[ \phi
\vartheta _{n-m}\right] \right\rangle =\left\langle \left[ \eta _{1}\right]
,\ldots ,\left[ \eta _{n-m}\right] \right\rangle $ and therefore $%
\left\langle \left[ \vartheta _{1}\right] ,\ldots ,\left[ \vartheta _{n-m}%
\right] \right\rangle =\left\langle \left[ \phi ^{-1}\eta _{1}\right]
,\ldots ,\left[ \phi ^{-1}\eta _{n-m}\right] \right\rangle $. By Corollary %
\ref{inv}, $\Psi \left( \eta _{1},\ldots ,\eta _{n-m}\right) =\Psi \left(
\phi ^{-1}\eta _{1},\ldots ,\phi ^{-1}\eta _{n-m}\right) $ and hence $\Psi
\left( \left\langle \left[ \vartheta _{1}\right] ,\ldots ,\left[ \vartheta
_{n-m}\right] \right\rangle \right) \geq \Psi \left( \eta _{1},\ldots ,\eta
_{n-m}\right) $. So, $\Psi \left( \left\langle \left[ \vartheta _{1}\right]
,\ldots ,\left[ \vartheta _{n-m}\right] \right\rangle \right) \geq \Psi
\left( \left[ \phi \vartheta _{1}\right] ,\ldots ,\left[ \phi \vartheta
_{n-m}\right] \right) $. On the other hand, let $\Psi \left( \left\langle %
\left[ \vartheta _{1}\right] ,\ldots ,\left[ \vartheta _{n-m}\right]
\right\rangle \right) =\Psi \left( \eta _{1}^{\prime },\ldots ,\eta
_{n-m}^{\prime }\right) $ for some $\eta _{1}^{\prime },\ldots ,\eta
_{n-m}^{\prime }\in {\mathcal{Z}}\left( {\mathcal{E\times E}},\mathbb{F}%
\right) $. Then $\left\langle \left[ \vartheta _{1}\right] ,\ldots ,\left[
\vartheta _{n-m}\right] \right\rangle =\left\langle \left[ \eta _{1}^{\prime
}\right] ,\ldots ,\left[ \eta _{n-m}^{\prime }\right] \right\rangle $ and
therefore $\left\langle \left[ \phi \vartheta _{1}\right] ,\ldots ,\left[
\phi \vartheta _{n-m}\right] \right\rangle =\left\langle \left[ \phi \eta
_{1}^{\prime }\right] ,\ldots ,\left[ \phi \eta _{n-m}^{\prime }\right]
\right\rangle $. So, by Corollary \ref{inv}, $\Psi \left( \left\langle \left[
\vartheta _{1}\right] ,\ldots ,\left[ \vartheta _{n-m}\right] \right\rangle
\right) \leq \Psi \left( \left\langle \left[ \phi \vartheta _{1}\right]
,\ldots ,\left[ \phi \vartheta _{n-m}\right] \right\rangle \right) $. This
completes the proof.
\end{proof}

Now we carry out a procedure that takes as input a nilpotent evolution
algebra ${\mathcal{E}}$\ of dimension $m$ with a natural basis $B_{{\mathcal{%
E}}}=\left\{ e_{1},\ldots ,e_{m}\right\} $. It gives us all nilpotent
evolution algebras ${\mathcal{\tilde{E}}}$ of dimension $n$ with a natural
basis $B_{{\mathcal{\tilde{E}}}}=\left\{ e_{1},\ldots ,e_{m},e_{m+1},\ldots
,e_{n}\right\} $ such that ${\mathcal{\tilde{E}}}/ann\left( {\mathcal{\tilde{%
E}}}\right) \cong {\mathcal{E}}$ and ${\mathcal{\tilde{E}}}$ has no
annihilator components. The procedure runs as follows:

\begin{enumerate}
\item Determine ${\mathcal{Z}}\left( {\mathcal{E\times E}},\mathbb{F}\right)
,{\mathcal{B}}\left( {\mathcal{E\times E}},\mathbb{F}\right) $ and ${%
\mathcal{H}}\left( {\mathcal{E\times E}},\mathbb{F}\right) $.

\item Let $V$ be a vector space with a basis $B_{V}=\left\{ e_{m+1},\ldots
,e_{n}\right\} $. Consider $\left[ \theta \right] \in {\mathcal{H}}\left( {%
\mathcal{E\times E}},V\right) $ with 
\begin{equation*}
\theta \left( e_{i},e_{i}\right) =\underset{j=1}{\overset{n-m}{\sum }}\theta
_{j}\left( e_{i},e_{i}\right) e_{m+j},
\end{equation*}%
where the $\left[ \theta _{j}\right] \in {\mathcal{H}}\left( {\mathcal{%
E\times E}},\mathbb{F}\right) $ are linearly independent, and $\theta ^{\bot
}\cap ann\left( {\mathcal{E}}\right) =0$. Use the action of $\mathcal{S}%
_{\theta }\left( {\mathcal{E}}\right) $ on $\left[ \theta \right] $ by hand
calculations to get a list of orbit representatives as small as possible.

\item For each $\theta $ found, construct ${\mathcal{E}}_{\theta }$. Get rid
of isomorphic ones.
\end{enumerate}

\section{Nilpotent evolution algebras of dimension up to four}

\label{four}Here we list all nilpotent evolution algebras over an
algebraically closed field $\mathbb{F}$ of dimension up to four. We denote
the $j$-th algebra of dimension $i$ with a natural basis $\left\{
e_{1},\ldots ,e_{i}\right\} $ by ${\mathcal{E}}_{i,j}$, and among the
natural basis of ${\mathcal{E}}_{i,j}$ the multiplication is specified by
giving only the nonzero products. Throughout the paper, we describe the
automorphism group, $Aut\left( {\mathcal{E}}\right) $, of an evolution
algebra ${\mathcal{E}}$ using column convention: the action of a $\phi \in
Aut\left( {\mathcal{E}}\right) $ on the $i$-th basis element of ${\mathcal{E}%
}$ is given by $i$-th column of the matrix of $\phi $.

\begin{thm}
(See \cite{Hegazi}.) Any nilpotent evolution algebra of dimension $\leq 3$
over an algebraically closed field $\mathbb{F}$ is isomorphic to one of the
following algebras:%
%TCIMACRO{\TeXButton{B}{\begin{table}[H] \centering}}%
%BeginExpansion
\begin{table}[H] \centering%
%EndExpansion
$%
\begin{tabular}{|c|c|c|c|c|c|}
\hline
${\mathcal{E}}$ & Multiplication Table & ${\mathcal{B}}\left( {\mathcal{%
E\times E}},\mathbb{F}\right) $ & ${\mathcal{H}}\left( {\mathcal{E\times E}},%
\mathbb{F}\right) $ & $ann\left( {\mathcal{E}}\right) $ & $Aut\left( {%
\mathcal{E}}\right) $ \\ \hline
${\mathcal{E}}_{1,1}$ & \multicolumn{1}{|l|}{} & \multicolumn{1}{|l|}{} & 
\multicolumn{1}{|l|}{$Sym\left( {\mathcal{E}}_{1,1}\right) $} & ${\mathcal{E}%
}_{1,1}$ & $GL\left( {\mathcal{E}}_{1,1}\right) $ \\ \hline
${\mathcal{E}}_{2,1}$ & \multicolumn{1}{|l|}{} & \multicolumn{1}{|l|}{} & 
\multicolumn{1}{|l|}{$Sym\left( {\mathcal{E}}_{2,1}\right) $} & ${\mathcal{E}%
}_{2,1}$ & $GL\left( {\mathcal{E}}_{2,1}\right) $ \\ \hline
${\mathcal{E}}_{2,2}$ & \multicolumn{1}{|l|}{$e_{1}^{2}=e_{2}$} & 
\multicolumn{1}{|l|}{$\left\langle \delta _{e_{1},e_{1}}\right\rangle $} & 
\multicolumn{1}{|l|}{$\left\langle \left[ \delta _{e_{2},e_{2}}\right]
\right\rangle $} & $\left\langle e_{2}\right\rangle $ & $\phi =%
\begin{bmatrix}
a_{11} & 0 \\ 
a_{21} & a_{11}^{2}%
\end{bmatrix}%
$ \\ \hline
${\mathcal{E}}_{3,1}$ & \multicolumn{1}{|l|}{} & \multicolumn{1}{|l|}{} & 
\multicolumn{1}{|l|}{$Sym\left( {\mathcal{E}}_{3,1}\right) $} & ${\mathcal{E}%
}_{3,1}$ & $GL\left( {\mathcal{E}}_{3,1}\right) $ \\ \hline
${\mathcal{E}}_{3,2}$ & \multicolumn{1}{|l|}{$e_{1}^{2}=e_{2}$} & 
\multicolumn{1}{|l|}{$\left\langle \delta _{e_{1},e_{1}}\right\rangle $} & 
\multicolumn{1}{|l|}{$\left\langle \left[ \delta _{e_{2},e_{2}}\right] ,%
\left[ \delta _{e_{3},e_{3}}\right] \right\rangle $} & $\left\langle
e_{2},e_{3}\right\rangle $ & $\phi =%
\begin{bmatrix}
a_{11} & 0 & 0 \\ 
a_{21} & a_{11}^{2} & a_{23} \\ 
a_{31} & 0 & a_{33}%
\end{bmatrix}%
$ \\ \hline
${\mathcal{E}}_{3,3}$ & \multicolumn{1}{|l|}{$%
e_{1}^{2}=e_{3},e_{2}^{2}=e_{3} $} & \multicolumn{1}{|l|}{$\left\langle
\delta _{e_{1},e_{1}}+\delta _{e_{2},e_{2}}\right\rangle $} & 
\multicolumn{1}{|l|}{$\left\langle \left[ \delta _{e_{1},e_{1}}\right] ,%
\left[ \delta _{e_{3},e_{3}}\right] \right\rangle $} & $\left\langle
e_{3}\right\rangle $ & $%
\begin{array}{l}
\phi =%
\begin{bmatrix}
a_{11} & a_{12} & 0 \\ 
a_{21} & a_{22} & 0 \\ 
a_{31} & a_{32} & a_{33}%
\end{bmatrix}%
, \\ 
a_{11}^{2}+a_{21}^{2}=a_{33}, \\ 
a_{12}^{2}+a_{22}^{2}=a_{33}, \\ 
a_{11}a_{12}+a_{21}a_{22}=0.%
\end{array}%
$ \\ \hline
${\mathcal{E}}_{3,4}$ & \multicolumn{1}{|l|}{$%
e_{1}^{2}=e_{2},e_{2}^{2}=e_{3} $} & \multicolumn{1}{|l|}{$\left\langle
\delta _{e_{1},e_{1}},\delta _{e_{2},e_{2}}\right\rangle $} & 
\multicolumn{1}{|l|}{$\left\langle \left[ \delta _{e_{3},e_{3}}\right]
\right\rangle $} & $\left\langle e_{3}\right\rangle $ & $\phi =%
\begin{bmatrix}
a_{11} & 0 & 0 \\ 
0 & a_{11}^{2} & 0 \\ 
a_{31} & 0 & a_{11}^{4}%
\end{bmatrix}%
$ \\ \hline
\end{tabular}%
$%
%TCIMACRO{%
%\TeXButton{caption}{\caption{Nilpotent evolution algebras of dimension up to three.}}}%
%BeginExpansion
\caption{Nilpotent evolution algebras of dimension up to three.}%
%EndExpansion
%TCIMACRO{\TeXButton{E}{\end{table}}}%
%BeginExpansion
\end{table}%
%EndExpansion
\end{thm}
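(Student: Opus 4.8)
The plan is to argue by induction on the dimension, converting the classification into the enumeration of annihilator extensions prescribed by the procedure at the end of Section \ref{the metod}. In dimension one, nilpotency forces $e_{1}^{2}=0$, so the only algebra is the trivial ${\mathcal{E}}_{1,1}$. For the inductive step, let ${\mathcal{E}}$ be nilpotent of dimension $n\leq 3$. Nilpotency yields $ann\left({\mathcal{E}}\right)\neq 0$, so by the decomposition theorem established above, ${\mathcal{E}}\cong E_{\theta}$ with $E={\mathcal{E}}/ann\left({\mathcal{E}}\right)$ of dimension $m<n$ and $\theta\in{\mathcal{Z}}\left(E\times E,V\right)$, $\dim V=n-m$, satisfying $\theta^{\bot}\cap ann\left(E\right)=0$. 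Hence every $n$-dimensional nilpotent evolution algebra is an annihilator extension of an algebra already on the list, and it suffices to run the three-step procedure over each such $E$.

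For each lower-dimensional $E$ I would first record ${\mathcal{Z}}\left(E\times E,\mathbb{F}\right)$, ${\mathcal{B}}\left(E\times E,\mathbb{F}\right)$ and ${\mathcal{H}}\left(E\times E,\mathbb{F}\right)$ by means of Lemma \ref{Qouitent}, then enumerate the classes $\left[\theta\right]$ whose components $\left[\theta_{j}\right]$ are linearly independent and which satisfy $\theta^{\bot}\cap ann\left(E\right)=0$, reducing modulo the combined action of ${\mathcal{S}}_{\theta}\left(E\right)$ and of $GL\left(V\right)$. Lemma \ref{iso3} and Lemma \ref{same orbit} guarantee that representatives lying in one orbit give isomorphic extensions and conversely, so this reduction loses nothing. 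The annihilator-component lemma then isolates the linearly dependent (decomposable) cases, which I recover separately as direct sums of smaller indecomposables; these account for the trivial algebras ${\mathcal{E}}_{n,1}$ and, in dimension three, for ${\mathcal{E}}_{3,2}\cong{\mathcal{E}}_{2,2}\oplus{\mathcal{E}}_{1,1}$.

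Carrying this out, the extension of ${\mathcal{E}}_{1,1}$ has a single admissible class, normalized by scaling to ${\mathcal{E}}_{2,2}\colon e_{1}^{2}=e_{2}$. In dimension three the new indecomposables arise in two ways. Extending the zero algebra ${\mathcal{E}}_{2,1}$ by a one-dimensional $V$, the condition $\theta^{\bot}\cap ann\left({\mathcal{E}}_{2,1}\right)=0$ forces $\theta$ to be a nondegenerate symmetric form on ${\mathcal{E}}_{2,1}$; since $\mathbb{F}$ is algebraically closed all nondegenerate binary symmetric forms are congruent, normalizing to $\delta_{e_{1},e_{1}}+\delta_{e_{2},e_{2}}$ and giving ${\mathcal{E}}_{3,3}\colon e_{1}^{2}=e_{3},\ e_{2}^{2}=e_{3}$. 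Extending ${\mathcal{E}}_{2,2}$ by a one-dimensional $V$, the unique nonzero class $\left[\delta_{e_{2},e_{2}}\right]$ meets the radical condition and yields ${\mathcal{E}}_{3,4}\colon e_{1}^{2}=e_{2},\ e_{2}^{2}=e_{3}$. The remaining data ${\mathcal{B}}$, ${\mathcal{H}}$, $ann$ and $Aut$ recorded in the table follow by direct computation.

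The main obstacle is the orbit analysis in the second step, namely proving that over an algebraically closed field the admissible $\theta$ collapse to the stated single representative and that no admissible case is overlooked. For ${\mathcal{E}}_{3,3}$ this is precisely where algebraic closedness is indispensable: the stabilizer ${\mathcal{S}}_{\theta}\left({\mathcal{E}}_{2,1}\right)$ is the orthogonal-type group cut out by $a_{11}^{2}+a_{21}^{2}=a_{33}$, $a_{12}^{2}+a_{22}^{2}=a_{33}$, $a_{11}a_{12}+a_{21}a_{22}=0$, and one must extract square roots to bring an arbitrary nondegenerate form to a sum of two squares. The final bookkeeping---checking that the surviving representatives are pairwise non-isomorphic---is handled by the invariants: $\dim ann$ separates ${\mathcal{E}}_{3,1}$, ${\mathcal{E}}_{3,2}$ and $\left\{{\mathcal{E}}_{3,3},{\mathcal{E}}_{3,4}\right\}$, while the index of nilpotency (equivalently the invariant $\Psi$ of Corollary \ref{inv}) separates ${\mathcal{E}}_{3,3}$, of index $3$, from ${\mathcal{E}}_{3,4}$, of index $4$.
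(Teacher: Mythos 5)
Your proposal is correct and follows essentially the same route as the paper: the paper imports this theorem from \cite{Hegazi}, whose proof is exactly the annihilator-extension procedure of Section \ref{the metod} (decompose via ${\mathcal{E}}\cong E_{\theta}$ with $E={\mathcal{E}}/ann({\mathcal{E}})$, enumerate admissible $\left[\theta\right]$ up to the action of $Aut(E)$ and $GL(V)$, and separate the resulting algebras by invariants), which is also how the paper handles dimensions four and five. The only cosmetic slip is the parenthetical identifying the index of nilpotency with the invariant $\Psi$ of Corollary \ref{inv} --- these are different invariants, and since ${\mathcal{E}}_{3,3}$ and ${\mathcal{E}}_{3,4}$ are extensions of non-isomorphic base algebras the cleanest separation is via the uniqueness of $E\cong{\mathcal{E}}/ann({\mathcal{E}})$ --- but your index-of-nilpotency argument stands on its own.
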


Let us now classify 4-dimensional nilpotent evolution algebras. For this, we
have the following subsections.

\subsection{Nilpotent evolution algebras with annihilator components}

By Definition \ref{annihilator component}, we get the algebras ${\mathcal{E}}%
_{4,i}={\mathcal{E}}_{3,i}\oplus {\mathcal{E}}_{1,1}$, $i=1,\ldots ,4$.

\subsection{\textbf{1-dimensional annihilator extensions of} ${\mathcal{E}}%
_{3,1}$}

Let $\left[ \theta \right] =\alpha _{1}\left[ \delta _{e_{1},e_{1}}\right]
+\alpha _{2}\left[ \delta _{e_{2},e_{2}}\right] +\alpha _{3}\left[ \delta
_{e_{3},e_{3}}\right] \in {\mathcal{H}}\left( {\mathcal{E}}_{3,1}{\mathcal{%
\times E}}_{3,1},\mathbb{F}\right) $ such that $\theta ^{\bot }\cap
ann\left( {\mathcal{E}}_{3,1}\right) =0$. Then $\alpha _{1}\alpha _{2}\alpha
_{3}\neq 0$. Let $\phi $ be the following automorphism%
\begin{equation*}
\phi =%
\begin{bmatrix}
\alpha _{1}^{-\frac{1}{2}} & 0 & 0 \\ 
0 & \alpha _{2}^{-\frac{1}{2}} & 0 \\ 
0 & 0 & \alpha _{3}^{-\frac{1}{2}}%
\end{bmatrix}%
.
\end{equation*}%
Then $\left[ \phi \theta \right] =\left[ \delta _{e_{1},e_{1}}\right] +\left[
\delta _{e_{2},e_{2}}\right] +\left[ \delta _{e_{3},e_{3}}\right] $. So, we
may assume $\theta =\delta _{e_{1},e_{1}}+\delta _{e_{2},e_{2}}+\delta
_{e_{3},e_{3}}$. Hence we get the algebra ${\mathcal{E}}%
_{4,5}:e_{1}^{2}=e_{2}^{2}=e_{3}^{2}=e_{4}.$

\subsection{\textbf{1-dimensional annihilator extensions of} ${\mathcal{E}}%
_{3,2}$}

Let $\left[ \theta \right] =\alpha _{1}\left[ \delta _{e_{2},e_{2}}\right]
+\alpha _{2}\left[ \delta _{e_{3},e_{3}}\right] \in {\mathcal{H}}\left( {%
\mathcal{E}}_{3,2}{\mathcal{\times E}}_{3,2},\mathbb{F}\right) $ with $%
\alpha _{1}\alpha _{2}\neq 0$. Let $\phi $ be the following automorphism%
\begin{equation*}
\phi =%
\begin{bmatrix}
\alpha _{1}^{-\frac{1}{4}} & 0 & 0 \\ 
0 & \alpha _{1}^{-\frac{1}{2}} & 0 \\ 
0 & 0 & \alpha _{2}^{-\frac{1}{2}}%
\end{bmatrix}%
.
\end{equation*}%
Then $\left[ \phi \theta \right] =\left[ \delta _{e_{2},e_{2}}\right] +\left[
\delta _{e_{3},e_{3}}\right] $. So, we may assume $\theta =\delta
_{e_{2},e_{2}}+\delta _{e_{3},e_{3}}$. Hence we get the algebra ${\mathcal{E}%
}_{4,6}:e_{1}^{2}=e_{2},e_{2}^{2}=e_{3}^{2}=e_{4}.$

\subsection{\textbf{1-dimensional annihilator extensions of} ${\mathcal{E}}%
_{3,3}$}

Let $\left[ \theta \right] =\alpha _{1}\left[ \delta _{e_{1},e_{1}}\right]
+\alpha _{2}\left[ \delta _{e_{3},e_{3}}\right] \in {\mathcal{H}}\left( {%
\mathcal{E}}_{3,3}{\mathcal{\times E}}_{3,3},\mathbb{F}\right) $ with $%
\alpha _{2}\neq 0$. If $\alpha _{1}=0$, then $\left[ \theta \right] =\alpha
_{2}\left[ \delta _{e_{3},e_{3}}\right] $. Since $\alpha _{2}\left[ \delta
_{e_{3},e_{3}}\right] $ spans the same subspace as $\left[ \delta
_{e_{3},e_{3}}\right] $, we may assume that $\theta =\delta _{e_{3},e_{3}}$.
Thus we get the algebra ${\mathcal{E}}%
_{4,7}:e_{1}^{2}=e_{3},e_{2}^{2}=e_{3},e_{3}^{2}=e_{4}$. If $\alpha _{1}\neq
0$, we choose $\phi $ to be the following automorphism%
\begin{equation*}
\phi =%
\begin{bmatrix}
\alpha _{1}^{\frac{1}{2}}\alpha _{2}^{-\frac{1}{2}} & 0 & 0 \\ 
0 & \alpha _{1}^{\frac{1}{2}}\alpha _{2}^{-\frac{1}{2}} & 0 \\ 
0 & 0 & \alpha _{1}\alpha _{2}^{-1}%
\end{bmatrix}%
.
\end{equation*}%
Then $\left[ \phi \theta \right] =\alpha _{1}^{2}\alpha _{2}^{-1}\left( %
\left[ \delta _{e_{2},e_{2}}\right] +\left[ \delta _{e_{3},e_{3}}\right]
\right) $. So, we may assume $\theta =\delta _{e_{2},e_{2}}+\delta
_{e_{3},e_{3}}$. Hence we get the algebra ${\mathcal{E}}%
_{4,8}:e_{1}^{2}=e_{3}+e_{4},e_{2}^{2}=e_{3},e_{3}^{2}=e_{4}$. Further, ${%
\mathcal{E}}_{4,7}$ is not isomorphic to ${\mathcal{E}}_{4,8}$ since $\Psi
\left( \left\langle \left[ \delta _{e_{3},e_{3}}\right] \right\rangle
\right) =\left( 2\right) $ while $\Psi \left( \left\langle \left[ \delta
_{e_{2},e_{2}}\right] +\left[ \delta _{e_{3},e_{3}}\right] \right\rangle
\right) =\left( 1\right) $.

\subsection{\textbf{1-dimensional annihilator extensions of} ${\mathcal{E}}%
_{3,4}$}

As ${\mathcal{H}}\left( {\mathcal{E}}_{3,4}{\mathcal{\times E}}_{3,4},%
\mathbb{F}\right) $ is spanned by $\left\langle \left[ \delta _{e_{3},e_{3}}%
\right] \right\rangle $, we may assume $\theta =\delta _{e_{3},e_{3}}$.
Therefore we get the algebra ${\mathcal{E}}%
_{4,9}:e_{1}^{2}=e_{2},e_{2}^{2}=e_{3},e_{3}^{2}=e_{4}.$

\subsection{\textbf{2-dimensional annihilator extensions of} ${\mathcal{E}}%
_{2,1}$}

Here ${\mathcal{H}}\left( {\mathcal{E}}_{2,1}{\mathcal{\times E}}_{2,1},%
\mathbb{F}\right) =\left\langle \left[ \delta _{e_{1},e_{1}}\right] ,\left[
\delta _{e_{2},e_{2}}\right] \right\rangle $ and therefore we get only one
algebra, namely ${\mathcal{E}}_{4,9}:e_{1}^{2}=e_{3},e_{2}^{2}=e_{4}$.

\begin{thm}
Any 4-dimensional nilpotent evolution algebra over an algebraically closed
field $\mathbb{F}$ is isomorphic to one of the following pairwise
non-isomorphic algebras: 
%TCIMACRO{\TeXButton{B}{\begin{table}[H] \centering}}%
%BeginExpansion
\begin{table}[H] \centering%
%EndExpansion
$%
\begin{tabular}{|c|c|c|c|c|}
\hline
${\mathcal{E}}$ & Multiplication Table & ${\mathcal{B}}\left( {\mathcal{%
E\times E}},\mathbb{F}\right) $ & ${\mathcal{H}}\left( {\mathcal{E\times E}},%
\mathbb{F}\right) $ & $ann\left( {\mathcal{E}}\right) $ \\ \hline
${\mathcal{E}}_{4,1}$ & \multicolumn{1}{|l|}{} & \multicolumn{1}{|l|}{} & 
\multicolumn{1}{|l|}{$Sym\left( {\mathcal{E}}_{4,1}\right) $} & ${\mathcal{E}%
}_{4,1}$ \\ \hline
${\mathcal{E}}_{4,2}$ & \multicolumn{1}{|l|}{$e_{1}^{2}=e_{2}.$} & 
\multicolumn{1}{|l|}{$\left\langle \delta _{e_{1},e_{1}}\right\rangle $} & 
\multicolumn{1}{|l|}{$\left\langle \left[ \delta _{e_{2},e_{2}}\right] ,%
\left[ \delta _{e_{3},e_{3}}\right] ,\left[ \delta _{e_{4},e_{4}}\right]
\right\rangle $} & $\left\langle e_{2},e_{3},e_{4}\right\rangle $ \\ \hline
${\mathcal{E}}_{4,3}$ & \multicolumn{1}{|l|}{$%
e_{1}^{2}=e_{3},e_{2}^{2}=e_{3}.$} & \multicolumn{1}{|l|}{$\left\langle
\delta _{e_{1},e_{1}}+\delta _{e_{2},e_{2}}\right\rangle $} & 
\multicolumn{1}{|l|}{$\left\langle \left[ \delta _{e_{1},e_{1}}\right] ,%
\left[ \delta _{e_{3},e_{3}}\right] ,\left[ \delta _{e_{4},e_{4}}\right]
\right\rangle $} & $\left\langle e_{3},e_{4}\right\rangle $ \\ \hline
${\mathcal{E}}_{4,4}$ & \multicolumn{1}{|l|}{$%
e_{1}^{2}=e_{2},e_{2}^{2}=e_{3}.$} & \multicolumn{1}{|l|}{$\left\langle
\delta _{e_{1},e_{1}},\delta _{e_{2},e_{2}}\right\rangle $} & 
\multicolumn{1}{|l|}{$\left\langle \left[ \delta _{e_{3},e_{3}}\right] ,%
\left[ \delta _{e_{4},e_{4}}\right] \right\rangle $} & $\left\langle
e_{3},e_{4}\right\rangle $ \\ \hline
${\mathcal{E}}_{4,5}$ & \multicolumn{1}{|l|}{$%
e_{1}^{2}=e_{4},e_{2}^{2}=e_{4},e_{3}^{2}=e_{4}.$} & \multicolumn{1}{|l|}{$%
\left\langle \delta _{e_{1},e_{1}}+\delta _{e_{2},e_{2}}+\delta
_{e_{3},e_{3}}\right\rangle $} & \multicolumn{1}{|l|}{$\left\langle \left[
\delta _{e_{2},e_{2}}\right] ,\left[ \delta _{e_{3},e_{3}}\right] ,\left[
\delta _{e_{4},e_{4}}\right] \right\rangle $} & $\left\langle
e_{4}\right\rangle $ \\ \hline
${\mathcal{E}}_{4,6}$ & \multicolumn{1}{|l|}{$%
e_{1}^{2}=e_{2},e_{2}^{2}=e_{4},e_{3}^{2}=e_{4}.$} & \multicolumn{1}{|l|}{$%
\left\langle \delta _{e_{1},e_{1}},\delta _{e_{2},e_{2}}+\delta
_{e_{3},e_{3}}\right\rangle $} & \multicolumn{1}{|l|}{$\left\langle \left[
\delta _{e_{3},e_{3}}\right] ,\left[ \delta _{e_{4},e_{4}}\right]
\right\rangle $} & $\left\langle e_{4}\right\rangle $ \\ \hline
${\mathcal{E}}_{4,7}$ & \multicolumn{1}{|l|}{$%
e_{1}^{2}=e_{3},e_{2}^{2}=e_{3},e_{3}^{2}=e_{4}.$} & \multicolumn{1}{|l|}{$%
\left\langle \delta _{e_{1},e_{1}}+\delta _{e_{2},e_{2}},\delta
_{e_{3},e_{3}}\right\rangle $} & \multicolumn{1}{|l|}{$\left\langle \left[
\delta _{e_{2},e_{2}}\right] ,\left[ \delta _{e_{4},e_{4}}\right]
\right\rangle $} & $\left\langle e_{4}\right\rangle $ \\ \hline
${\mathcal{E}}_{4,8}$ & \multicolumn{1}{|c|}{$%
e_{1}^{2}=e_{3}+e_{4},e_{2}^{2}=e_{3},e_{3}^{2}=e_{4}.$} & 
\multicolumn{1}{|l|}{$\left\langle \delta _{e_{1},e_{1}}+\delta
_{e_{2},e_{2}},\delta _{e_{1},e_{1}}+\delta _{e_{3},e_{3}}\right\rangle $} & 
\multicolumn{1}{|l|}{$\left\langle \left[ \delta _{e_{3},e_{3}}\right] ,%
\left[ \delta _{e_{4},e_{4}}\right] \right\rangle $} & $\left\langle
e_{4}\right\rangle $ \\ \hline
${\mathcal{E}}_{4,9}$ & \multicolumn{1}{|l|}{$%
e_{1}^{2}=e_{2},e_{2}^{2}=e_{3},e_{3}^{2}=e_{4}.$} & \multicolumn{1}{|l|}{$%
\left\langle \delta _{e_{1},e_{1}},\delta _{e_{2},e_{2}},\delta
_{e_{3},e_{3}}\right\rangle $} & \multicolumn{1}{|l|}{$\left\langle \left[
\delta _{e_{4},e_{4}}\right] \right\rangle $} & $\left\langle
e_{4}\right\rangle $ \\ \hline
${\mathcal{E}}_{4,10}$ & \multicolumn{1}{|l|}{$%
e_{1}^{2}=e_{3},e_{2}^{2}=e_{4}.$} & \multicolumn{1}{|l|}{$\left\langle
\delta _{e_{1},e_{1}},\delta _{e_{2},e_{2}}\right\rangle $} & 
\multicolumn{1}{|l|}{$\left\langle \left[ \delta _{e_{3},e_{3}}\right] ,%
\left[ \delta _{e_{4},e_{4}}\right] \right\rangle $} & $\left\langle
e_{3},e_{4}\right\rangle $ \\ \hline
\end{tabular}%
$%
%TCIMACRO{%
%\TeXButton{caption}{\caption{Nilpotent evolution algebras of dimension four.}}}%
%BeginExpansion
\caption{Nilpotent evolution algebras of dimension four.}%
%EndExpansion
\label{tab1}%
%TCIMACRO{\TeXButton{E}{\end{table}}}%
%BeginExpansion
\end{table}%
%EndExpansion
\end{thm}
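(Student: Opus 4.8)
The plan is to assemble the orbit computations of the preceding subsections into a single case analysis via the structure theorem, and then to separate the resulting algebras by isomorphism invariants. Since every nilpotent evolution algebra $\mathcal{E}$ has $ann(\mathcal{E}) \neq 0$, the structure theorem gives $\mathcal{E} \cong E_\theta$ with $E = \mathcal{E}/ann(\mathcal{E})$, $V = ann(\mathcal{E})$, $\theta \in \mathcal{Z}(E \times E, V)$, and $\theta^\perp \cap ann(E) = 0$. First I would treat the algebras with an annihilator component: if $\mathbb{F}e_r$ is one, then $\mathcal{E}$ splits as a three-dimensional nilpotent evolution ideal plus $\mathbb{F}e_r \cong \mathcal{E}_{1,1}$, so the three-dimensional classification forces $\mathcal{E} \cong \mathcal{E}_{3,i} \oplus \mathcal{E}_{1,1} = \mathcal{E}_{4,i}$ with $i \in \{1,2,3,4\}$.

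For $\mathcal{E}$ without annihilator components, \cite[Lemma 16]{Hegazi} tells us that the classes $[\theta_1], \ldots, [\theta_{4-m}]$ (where $m = \dim E$) are linearly independent in $\mathcal{H}(E \times E, \mathbb{F})$. As $\dim V = 4 - m \geq 1$ we have $m \leq 3$, and the number of independent classes is at most $\dim \mathcal{H}(E \times E, \mathbb{F})$; running over all nilpotent evolution algebras $E$ of dimension at most three, the subcases $E = \mathcal{E}_{1,1}$ and $E = \mathcal{E}_{2,2}$ are empty for dimension reasons, $E = \mathcal{E}_{2,1}$ yields $\mathcal{E}_{4,10}$, and $E = \mathcal{E}_{3,1}, \ldots, \mathcal{E}_{3,4}$ yield $\mathcal{E}_{4,5}, \ldots, \mathcal{E}_{4,9}$. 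The reduction to orbit representatives in each subcase is exactly the explicit-automorphism computation already carried out, with Lemmas \ref{iso1}, \ref{iso2}, \ref{iso3} and \ref{same orbit} ensuring that these representatives exhaust the isomorphism classes. This proves completeness.

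For pairwise non-isomorphism I would apply four invariants in sequence. The dimension $\dim ann(\mathcal{E})$ isolates $\mathcal{E}_{4,1}$ (value $4$) and $\mathcal{E}_{4,2}$ (value $3$), and splits off the blocks $\{\mathcal{E}_{4,3}, \mathcal{E}_{4,4}, \mathcal{E}_{4,10}\}$ (value $2$) and $\{\mathcal{E}_{4,5}, \ldots, \mathcal{E}_{4,9}\}$ (value $1$). Next, the isomorphism type of the quotient $E = \mathcal{E}/ann(\mathcal{E})$, which is an isomorphism invariant by the structure theorem, distinguishes $\mathcal{E}_{4,4}$ (quotient $\mathcal{E}_{2,2}$) from $\mathcal{E}_{4,3}, \mathcal{E}_{4,10}$ (quotient $\mathcal{E}_{2,1}$), and separates $\mathcal{E}_{4,5}, \mathcal{E}_{4,6}, \mathcal{E}_{4,9}$ (quotients $\mathcal{E}_{3,1}, \mathcal{E}_{3,2}, \mathcal{E}_{3,4}$) from the remaining pair $\mathcal{E}_{4,7}, \mathcal{E}_{4,8}$ (both with quotient $\mathcal{E}_{3,3}$). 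The presence of an annihilator component (equivalently $\dim \mathcal{E}^{\langle 2\rangle} = 1$) then separates $\mathcal{E}_{4,3}$ from $\mathcal{E}_{4,10}$.

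The only genuinely delicate pair is $\mathcal{E}_{4,7}$ versus $\mathcal{E}_{4,8}$: they share $\dim ann(\mathcal{E}) = 1$, quotient $\mathcal{E}_{3,3}$, and all derived dimensions, so I would separate them with the invariant $\Psi$. The former corresponds to the line $\langle [\delta_{e_3,e_3}] \rangle$ with $\Psi(\langle [\delta_{e_3,e_3}] \rangle) = (2)$ and the latter to $\langle [\delta_{e_2,e_2}] + [\delta_{e_3,e_3}] \rangle$ with value $(1)$; since $\Psi$ is an isomorphism invariant, they are non-isomorphic. Verifying these two values of $\Psi$, which reduces to minimizing $\dim \theta^\perp$ over the generators of the relevant line in $\mathcal{H}(\mathcal{E}_{3,3} \times \mathcal{E}_{3,3}, \mathbb{F})$, is the main obstacle, and it is precisely the computation already performed in the subsection on one-dimensional annihilator extensions of $\mathcal{E}_{3,3}$.
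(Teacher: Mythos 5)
Your proposal is correct and follows essentially the same route as the paper: decompose via the structure theorem into algebras with annihilator components (reduced to the three-dimensional list) and annihilator extensions of the quotients $E\cong{\mathcal{E}}/ann({\mathcal{E}})$ of dimension at most three, use the orbit computations under $Aut(E)$ from the preceding subsections for completeness, and separate the one delicate pair ${\mathcal{E}}_{4,7}$, ${\mathcal{E}}_{4,8}$ by the invariant $\Psi$, exactly as the paper does. Your explicit sequencing of the remaining invariants ($\dim ann({\mathcal{E}})$, the isomorphism type of ${\mathcal{E}}/ann({\mathcal{E}})$, and $\dim{\mathcal{E}}^{\left\langle 2\right\rangle }$) merely makes precise what the paper leaves implicit, so no further comparison is needed.
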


\section{Five-dimensional nilpotent evolution algebras}

\label{classification}In this section we give a complete classification of
five-dimension nilpotent evolution algebras over an algebraically closed
field $\mathbb{F}$.

\subsection{Nilpotent evolution algebras with annihilator components}

By Definition \ref{annihilator component}, we get the algebras ${\mathcal{E}}%
_{5,i}={\mathcal{E}}_{4,i}\oplus {\mathcal{E}}_{1,1}$, $i=1,\ldots ,10$.

\subsection{\textbf{1-dimensional annihilator extensions of} ${\mathcal{E}}%
_{4,1}$}

Here $Aut\left( {\mathcal{E}}_{4,1}\right) =GL\left( {\mathcal{E}}%
_{4,1}\right) $. Moreover, ${\mathcal{H}}\left( {\mathcal{E\times E}},%
\mathbb{F}\right) =Sym\left( {\mathcal{E}}_{4,1}\right) $ and $ann\left( {%
\mathcal{E}}_{4,1}\right) ={\mathcal{E}}_{4,1}$. Let $\theta =\underset{i=1}{%
\overset{4}{\sum }}\alpha _{i}\delta _{e_{i},e_{i}}$ with $\alpha _{1}\alpha
_{2}\alpha _{3}\alpha _{4}\neq 0$. Let $\phi $ be the following automorphism%
\begin{equation*}
\phi =%
\begin{bmatrix}
\alpha _{1}^{-\frac{1}{2}} & 0 & 0 & 0 \\ 
0 & \alpha _{2}^{-\frac{1}{2}} & 0 & 0 \\ 
0 & 0 & \alpha _{3}^{-\frac{1}{2}} & 0 \\ 
0 & 0 & 0 & \alpha _{4}^{-\frac{1}{2}}%
\end{bmatrix}%
.
\end{equation*}%
Then $\phi \theta =\underset{i=1}{\overset{4}{\sum }}\delta _{e_{i},e_{i}}$.
So we may assume that $\theta =\underset{i=1}{\overset{4}{\sum }}\delta
_{e_{i},e_{i}}$. Therefore we get the algebra%
\begin{equation*}
{\mathcal{E}}_{5,11}:e_{1}^{2}=e_{2}^{2}=e_{3}^{2}=e_{4}^{2}=e_{5}.
\end{equation*}

\subsection{\textbf{1-dimensional annihilator extensions of} ${\mathcal{E}}%
_{4,2}$}

The automorphism group $Aut\left( {\mathcal{E}}_{4,2}\right) $\ consists of%
\begin{equation*}
\phi =%
\begin{bmatrix}
a_{11} & 0 & 0 & 0 \\ 
a_{21} & a_{11}^{2} & a_{23} & a_{24} \\ 
a_{31} & 0 & a_{33} & a_{34} \\ 
a_{41} & 0 & a_{43} & a_{44}%
\end{bmatrix}%
:a_{11}^{3}\left( a_{33}a_{44}-a_{34}a_{43}\right) \neq 0
\end{equation*}%
Let $\left[ \theta \right] =\alpha _{1}\left[ \delta _{e_{2},e_{2}}\right]
+\alpha _{2}\left[ \delta _{e_{3},e_{3}}\right] +\alpha _{3}\left[ \delta
_{e_{4},e_{4}}\right] $ with $\alpha _{1}\alpha _{2}\alpha _{3}\neq 0$.
Consider the following automorphism%
\begin{equation*}
\phi =%
\begin{bmatrix}
\alpha _{1}^{-\frac{1}{4}} & 0 & 0 & 0 \\ 
0 & \alpha _{1}^{-\frac{1}{2}} & 0 & 0 \\ 
0 & 0 & \alpha _{2}^{-\frac{1}{2}} & 0 \\ 
0 & 0 & 0 & \alpha _{3}^{-\frac{1}{2}}%
\end{bmatrix}%
.
\end{equation*}%
Then $\left[ \phi \theta \right] =\left[ \delta _{e_{2},e_{2}}\right] +\left[
\delta _{e_{3},e_{3}}\right] +\left[ \delta _{e_{4},e_{4}}\right] $. So we
get the algebra 
\begin{equation*}
{\mathcal{E}}_{5,12}:e_{1}^{2}=e_{2},e_{2}^{2}=e_{3}^{2}=e_{4}^{2}=e_{5}.
\end{equation*}

\subsection{\textbf{1-dimensional annihilator extensions of }${\mathcal{E}}%
_{4,3}$}

The automorphism group $Aut\left( {\mathcal{E}}_{4,3}\right) $\ consists of%
\begin{equation*}
\phi =%
\begin{bmatrix}
a_{11} & a_{12} & 0 & 0 \\ 
a_{21} & a_{22} & 0 & 0 \\ 
a_{31} & a_{32} & a_{33} & a_{34} \\ 
a_{41} & a_{42} & 0 & a_{44}%
\end{bmatrix}%
,
\end{equation*}%
such that%
\begin{eqnarray*}
a_{11}a_{12}+a_{22}a_{21} &=&0, \\
a_{11}^{2}+a_{21}^{2}-a_{33} &=&0, \\
a_{12}^{2}+a_{22}^{2}-a_{33} &=&0, \\
a_{33}a_{44}\left( a_{11}a_{22}-a_{12}a_{21}\right) &\neq &0.
\end{eqnarray*}%
Let $\left[ \theta \right] =\alpha _{1}\left[ \delta _{e_{1},e_{1}}\right]
+\alpha _{2}\left[ \delta _{e_{3},e_{3}}\right] +\alpha _{3}\left[ \delta
_{e_{4},e_{4}}\right] $ with $\alpha _{2}\alpha _{3}\neq 0$. Suppose first
that $\alpha _{1}=0$. Consider the following automorphism%
\begin{equation*}
\phi =%
\begin{bmatrix}
\alpha _{2}^{-\frac{1}{4}} & 0 & 0 & 0 \\ 
0 & \alpha _{2}^{-\frac{1}{4}} & 0 & 0 \\ 
0 & 0 & \alpha _{2}^{-\frac{1}{2}} & 0 \\ 
0 & 0 & 0 & \alpha _{3}^{-\frac{1}{2}}%
\end{bmatrix}%
.
\end{equation*}%
Then $\left[ \phi \theta \right] =\left[ \delta _{e_{3},e_{3}}\right] +\left[
\delta _{e_{4},e_{4}}\right] $. So we get the algebra%
\begin{equation*}
{\mathcal{E}}_{5,13}:e_{1}^{2}=e_{2}^{2}=e_{3},e_{3}^{2}=e_{4}^{2}=e_{5}.
\end{equation*}%
Assume now that $\alpha _{1}=0$. Let $\phi $ be the following automorphism%
\begin{equation*}
\phi =%
\begin{bmatrix}
\alpha _{1}^{\frac{1}{2}}\alpha _{2}^{-\frac{1}{2}} & 0 & 0 & 0 \\ 
0 & \alpha _{1}^{\frac{1}{2}}\alpha _{2}^{-\frac{1}{2}} & 0 & 0 \\ 
0 & 0 & \alpha _{1}\alpha _{2}^{-1} & 0 \\ 
0 & 0 & 0 & \alpha _{1}\alpha _{2}^{-\frac{1}{2}}\alpha _{3}^{-\frac{1}{2}}%
\end{bmatrix}%
.
\end{equation*}%
Then $\left[ \phi \theta \right] =\alpha _{1}^{2}\alpha _{2}^{-1}\left( %
\left[ \delta _{e_{1},e_{1}}\right] +\left[ \delta _{e_{3},e_{3}}\right] +%
\left[ \delta _{e_{4},e_{4}}\right] \right) $. So we may assume that $\theta
=\delta _{e_{1},e_{1}}+\delta _{e_{3},e_{3}}+\delta _{e_{4},e_{4}}$. Hence
we get the algebra%
\begin{equation*}
{\mathcal{E}}%
_{5,14}:e_{1}^{2}=e_{3}+e_{5},e_{2}^{2}=e_{3},e_{3}^{2}=e_{4}^{2}=e_{5}.
\end{equation*}%
Further, ${\mathcal{E}}_{5,13},$ ${\mathcal{E}}_{5,14}$ are non-isomorphic
since $\Psi \left( \left\langle \left[ \delta _{e_{3},e_{3}}\right] +\left[
\delta _{e_{4},e_{4}}\right] \right\rangle \right) =\left( 2\right) $ while $%
\Psi \left( \left\langle \left[ \delta _{e_{1},e_{1}}\right] +\left[ \delta
_{e_{3},e_{3}}\right] +\left[ \delta _{e_{4},e_{4}}\right] \right\rangle
\right) =\left( 1\right) $.

\subsection{\textbf{1-dimensional annihilator extensions of }${\mathcal{E}}%
_{4,4}$}

The automorphism group $Aut\left( {\mathcal{E}}_{4,4}\right) $\ consists of%
\begin{equation*}
\phi =%
\begin{bmatrix}
a_{11} & 0 & 0 & 0 \\ 
0 & a_{11}^{2} & 0 & 0 \\ 
a_{31} & 0 & a_{11}^{4} & 0 \\ 
a_{41} & 0 & 0 & a_{44}%
\end{bmatrix}%
:a_{11}a_{44}\neq 0.
\end{equation*}%
Let $\left[ \theta \right] =\alpha _{1}\left[ \delta _{e_{3},e_{3}}\right]
+\alpha _{2}\left[ \delta _{e_{4},e_{4}}\right] \ $with $\alpha _{1}\alpha
_{2}\neq 0$. Consider the following automorphism%
\begin{equation*}
\phi =%
\begin{bmatrix}
\alpha _{1}^{-\frac{1}{8}} & 0 & 0 & 0 \\ 
0 & \alpha _{1}^{-\frac{1}{4}} & 0 & 0 \\ 
0 & 0 & \alpha _{1}^{-\frac{1}{2}} & 0 \\ 
0 & 0 & 0 & \alpha _{2}^{-\frac{1}{2}}%
\end{bmatrix}%
.
\end{equation*}%
Then $\left[ \phi \theta \right] =\left[ \delta _{e_{3},e_{3}}\right] +\left[
\delta _{e_{4},e_{4}}\right] $. Hence we get the algebra 
\begin{equation*}
{\mathcal{E}}%
_{5,15}:e_{1}^{2}=e_{2},e_{2}^{2}=e_{3},e_{3}^{2}=e_{4}^{2}=e_{5}.
\end{equation*}

\subsection{\textbf{1-dimensional annihilator extensions of }${\mathcal{E}}%
_{4,5}$}

\label{illus}The automorphism group $Aut\left( {\mathcal{E}}_{4,5}\right) $\
consists of%
\begin{equation*}
\phi =%
\begin{bmatrix}
a_{11} & a_{12} & a_{13} & 0 \\ 
a_{21} & a_{22} & a_{23} & 0 \\ 
a_{31} & a_{32} & a_{33} & 0 \\ 
a_{41} & a_{42} & a_{43} & a_{44}%
\end{bmatrix}%
\end{equation*}%
such that%
\begin{eqnarray*}
a_{11}a_{13}+a_{21}a_{23}+a_{31}a_{33} &=&0, \\
a_{12}a_{13}+a_{22}a_{23}+a_{32}a_{33} &=&0, \\
a_{11}a_{12}+a_{21}a_{22}+a_{31}a_{32} &=&0, \\
a_{11}^{2}+a_{21}^{2}+a_{31}^{2}-a_{44} &=&0, \\
a_{12}^{2}+a_{22}^{2}+a_{32}^{2}-a_{44} &=&0, \\
a_{13}^{2}+a_{23}^{2}+a_{33}^{2}-a_{44} &=&0 \\
\det \phi &\neq &0.
\end{eqnarray*}%
Let $\left[ \theta \right] =\alpha _{1}\left[ \delta _{e_{2},e_{2}}\right]
+\alpha _{2}\left[ \delta _{e_{3},e_{3}}\right] +\alpha _{3}\left[ \delta
_{e_{4},e_{4}}\right] $ with $\alpha _{3}\neq 0$. Let us consider the
following cases:

\begin{enumerate}
\item $\alpha _{1}=\alpha _{2}=0$. Then we may assume that $\theta =\delta
_{e_{4},e_{4}}$. So we get the algebra%
\begin{equation*}
{\mathcal{E}}_{5,16}:e_{1}^{2}=e_{2}^{2}=e_{3}^{2}=e_{4},e_{4}^{2}=e_{5}.
\end{equation*}

\item $\alpha _{1}\neq 0,\alpha _{2}=0$. Let $\phi $ be the following
automorphism%
\begin{equation*}
\phi =%
\begin{bmatrix}
\alpha _{1}^{\frac{1}{2}}\alpha _{3}^{-\frac{1}{2}} & 0 & 0 & 0 \\ 
0 & \alpha _{1}^{\frac{1}{2}}\alpha _{3}^{-\frac{1}{2}} & 0 & 0 \\ 
0 & 0 & \alpha _{1}^{\frac{1}{2}}\alpha _{3}^{-\frac{1}{2}} & 0 \\ 
0 & 0 & 0 & \alpha _{1}\alpha _{3}^{-1}%
\end{bmatrix}%
.
\end{equation*}%
Then $\left[ \phi \theta \right] =\alpha _{1}^{2}\alpha _{3}^{-1}\left( %
\left[ \delta _{e_{2},e_{2}}\right] +\left[ \delta _{e_{4},e_{4}}\right]
\right) $. So we may assume that $\theta =\delta _{e_{2},e_{2}}+\delta
_{e_{4},e_{4}}$. Therefore we get the algebra%
\begin{equation*}
{\mathcal{E}}%
_{5,17}:e_{1}^{2}=e_{3}^{2}=e_{4},e_{2}^{2}=e_{4}+e_{5},e_{4}^{2}=e_{5}.
\end{equation*}

\item $\alpha _{1}=0,\alpha _{2}\neq 0$. Let $\phi $ be the following
automorphism%
\begin{equation*}
\phi =%
\begin{bmatrix}
\alpha _{2}^{\frac{1}{2}}\alpha _{3}^{-\frac{1}{2}} & 0 & 0 & 0 \\ 
0 & 0 & \alpha _{2}^{\frac{1}{2}}\alpha _{3}^{-\frac{1}{2}} & 0 \\ 
0 & \alpha _{2}^{\frac{1}{2}}\alpha _{3}^{-\frac{1}{2}} & 0 & 0 \\ 
0 & 0 & 0 & \alpha _{2}\alpha _{3}^{-1}%
\end{bmatrix}%
.
\end{equation*}%
Then $\left[ \phi \theta \right] =\alpha _{2}^{2}\alpha _{3}^{-1}\left( %
\left[ \delta _{e_{2},e_{2}}\right] +\left[ \delta _{e_{4},e_{4}}\right]
\right) $. So, we again get the algebra ${\mathcal{E}}_{5,17}$.

\item $\alpha _{1}=\alpha _{2},\alpha _{1}^{2}\neq 0$. Let $\phi $ be the
following automorphism 
\begin{equation*}
\phi =%
\begin{bmatrix}
0 & \sqrt{-\alpha _{1}\alpha _{3}^{-1}} & 0 & 0 \\ 
0 & 0 & \sqrt{-\alpha _{1}\alpha _{3}^{-1}} & 0 \\ 
\sqrt{-\alpha _{1}\alpha _{3}^{-1}} & 0 & 0 & 0 \\ 
0 & 0 & 0 & -\alpha _{1}\alpha _{3}^{-1}%
\end{bmatrix}%
.
\end{equation*}%
Then $\left[ \phi \theta \right] =-\alpha _{1}^{2}\alpha _{3}^{-1}\left[
\delta _{e_{1},e_{1}}+\delta _{e_{3},e_{3}}\right] +\alpha _{1}^{2}\alpha
_{3}^{-1}\left[ \delta _{e_{4},e_{4}}\right] =\alpha _{1}^{2}\alpha
_{3}^{-1}\left( \left[ \delta _{e_{2},e_{2}}\right] +\left[ \delta
_{e_{4},e_{4}}\right] \right) $. So we may assume that $\theta =\delta
_{e_{2},e_{2}}+\delta _{e_{4},e_{4}}$. So, we again get the algebra ${%
\mathcal{E}}_{5,17}$.

\item $\alpha _{1}\alpha _{2}\left( \alpha _{1}-\alpha _{2}\right) \neq 0$.
Let $\phi $ be the following automorphism%
\begin{equation*}
\phi =%
\begin{bmatrix}
\alpha _{1}^{\frac{1}{2}}\alpha _{3}^{-\frac{1}{2}} & 0 & 0 & 0 \\ 
0 & \alpha _{1}^{\frac{1}{2}}\alpha _{3}^{-\frac{1}{2}} & 0 & 0 \\ 
0 & 0 & \alpha _{1}^{\frac{1}{2}}\alpha _{3}^{-\frac{1}{2}} & 0 \\ 
0 & 0 & 0 & \alpha _{1}\alpha _{3}^{-1}%
\end{bmatrix}%
.
\end{equation*}%
Then $\left[ \phi \theta \right] =\alpha _{1}^{2}\alpha _{3}^{-1}\left( %
\left[ \delta _{e_{2},e_{2}}\right] +\alpha _{1}^{-1}\alpha _{2}\left[
\delta _{e_{3},e_{3}}\right] +\left[ \delta _{e_{4},e_{4}}\right] \right) $.
So we may assume that $\theta =\delta _{e_{2},e_{2}}+\alpha _{1}^{-1}\alpha
_{2}\delta _{e_{3},e_{3}}+\delta _{e_{4},e_{4}}$. Set $\alpha =\alpha
_{1}^{-1}\alpha _{2}$. Then $\alpha \neq 0,1$. So we get the algebras%
\begin{equation*}
{\mathcal{E}}_{5,18}^{\alpha \in \mathbb{F}^{\ast }\mathbb{-}\left\{
1\right\} }:e_{1}^{2}=e_{4},e_{2}^{2}=e_{4}+e_{5},e_{3}^{2}=e_{4}+\alpha
e_{5},e_{4}^{2}=e_{5}.
\end{equation*}
\end{enumerate}

Since $\Psi \left( \left\langle \left[ \delta _{e_{2},e_{2}}\right] +\left[
\delta _{e_{4},e_{4}}\right] \right\rangle \right) =\left( 2\right) $ and $%
\Psi \left( \left\langle \left[ \delta _{e_{2},e_{2}}\right] +\alpha \left[
\delta _{e_{3},e_{3}}\right] +\left[ \delta _{e_{4},e_{4}}\right]
\right\rangle \right) =\left( 1\right) $, ${\mathcal{E}}_{5,17}$ is
non-isomorphic to ${\mathcal{E}}_{5,18}^{\alpha }$. Further, we claim that ${%
\mathcal{E}}_{5,18}^{\alpha }$ is isomorphic to ${\mathcal{E}}_{5,18}^{\beta
}$ if and only if $\beta =\alpha ,\alpha ^{-1},1-\alpha ,\left( 1-\alpha
\right) ^{-1},\alpha \left( \alpha -1\right) ^{-1},\alpha ^{-1}\left( \alpha
-1\right) $. To see this, suppose first that ${\mathcal{E}}_{5,18}^{\alpha
}\cong {\mathcal{E}}_{5,18}^{\beta }$. Then there exists a $\phi \in
Aut\left( {\mathcal{E}}_{4,5}\right) $ such that $\phi \left( \left[ \delta
_{e_{2},e_{2}}\right] +\alpha \left[ \delta _{e_{3},e_{3}}\right] +\left[
\delta _{e_{4},e_{4}}\right] \right) =\lambda \left( \left[ \delta
_{e_{2},e_{2}}\right] +\beta \left[ \delta _{e_{3},e_{3}}\right] +\left[
\delta _{e_{4},e_{4}}\right] \right) $ for some $\lambda \in \mathbb{F}%
^{\ast }$. This the amounts to the following polynomial equations:%
\begin{eqnarray*}
a_{22}^{2}+\alpha a_{32}^{2}+a_{42}^{2}-\left( a_{21}^{2}+\alpha
a_{31}^{2}+a_{41}^{2}\right) &=&\lambda , \\
a_{23}^{2}+\alpha a_{33}^{2}+a_{43}^{2}-\left( a_{21}^{2}+\alpha
a_{31}^{2}+a_{41}^{2}\right) &=&\lambda \beta , \\
a_{44}^{2} &=&\lambda , \\
a_{21}a_{22}+a_{41}a_{42}+\alpha a_{31}a_{32} &=&0, \\
a_{21}a_{23}+a_{41}a_{43}+\alpha a_{31}a_{33} &=&0, \\
a_{22}a_{23}+a_{42}a_{43}+\alpha a_{32}a_{33} &=&0, \\
a_{41}a_{44} &=&0, \\
a_{42}a_{44} &=&0, \\
a_{43}a_{44} &=&0,
\end{eqnarray*}%
In addition to these equations we have%
\begin{eqnarray*}
a_{11}a_{13}+a_{21}a_{23}+a_{31}a_{33} &=&0, \\
a_{12}a_{13}+a_{22}a_{23}+a_{32}a_{33} &=&0, \\
a_{11}a_{12}+a_{21}a_{22}+a_{31}a_{32} &=&0, \\
a_{11}^{2}+a_{21}^{2}+a_{31}^{2}-a_{44} &=&0, \\
a_{12}^{2}+a_{22}^{2}+a_{32}^{2}-a_{44} &=&0, \\
a_{13}^{2}+a_{23}^{2}+a_{33}^{2}-a_{44} &=&0, \\
\det \phi &\neq &0,
\end{eqnarray*}%
which ensure that $\phi \in Aut\left( {\mathcal{E}}_{4,5}\right) $. To solve
these equations, it is convenient to compute the Gr\H{o}bner basis of the
ideal generated by all of the previous polynomial equations to get an
equivalent set of equations which may be easier to solve. This way, we get
(among others) the following equation:

$\alpha ^{6}\beta ^{4}-2\alpha ^{6}\beta ^{3}+\alpha ^{6}\beta ^{2}-3\alpha
^{5}\beta ^{4}+6\alpha ^{5}\beta ^{3}-3\alpha ^{5}\allowbreak \beta
^{2}-\alpha ^{4}\beta ^{6}+3\alpha ^{4}\beta ^{5}-5\alpha ^{4}\beta
^{3}+3\alpha ^{4}\beta -\alpha ^{4}+2\alpha ^{3}\allowbreak \beta
^{6}-6\alpha ^{3}\beta ^{5}+5\alpha ^{3}\beta ^{4}+5\alpha ^{3}\beta
^{2}-6\alpha ^{3}\beta +2\alpha ^{3}-\alpha ^{2}\allowbreak \beta
^{6}+3\alpha ^{2}\beta ^{5}-5\alpha ^{2}\beta ^{3}+3\alpha ^{2}\beta -\alpha
^{2}-3\alpha \beta ^{4}+6\alpha \beta ^{3}-\allowbreak 3\alpha \beta
^{2}+\beta ^{4}-2\beta ^{3}+\beta ^{2}=0$. This is equivalent to $\left(
\left( 1-\alpha \right) \beta -1\right) \left( \beta -\left( 1-\alpha
\right) \right) \left( \alpha \beta -1\right) \left( \beta -\alpha \right)
\left( \alpha \beta -\left( \alpha -1\right) \right) \left( \beta \left(
\alpha -1\right) -\alpha \right) =0$, as claimed. Conversely, let $\beta
=\alpha ,\alpha ^{-1},1-\alpha ,\left( 1-\alpha \right) ^{-1},\alpha \left(
\alpha -1\right) ^{-1},\alpha ^{-1}\left( \alpha -1\right) $. Consider the
following automorphisms%
\begin{eqnarray*}
\phi _{1} &=&%
\begin{bmatrix}
-\alpha ^{\frac{1}{2}} & 0 & 0 & 0 \\ 
0 & 0 & \alpha ^{\frac{1}{2}} & 0 \\ 
0 & \alpha ^{\frac{1}{2}} & 0 & 0 \\ 
0 & 0 & 0 & \alpha 
\end{bmatrix}%
,\phi _{2}=%
\begin{bmatrix}
0 & \sqrt{-1} & 0 & 0 \\ 
\sqrt{-1} & 0 & 0 & 0 \\ 
0 & 0 & \sqrt{-1} & 0 \\ 
0 & 0 & 0 & -1%
\end{bmatrix}%
,\phi _{3}=%
\begin{bmatrix}
0 & 0 & \left( \alpha -1\right) ^{\frac{1}{2}} & 0 \\ 
\frac{\left( \alpha -1\right) ^{3}}{\sqrt{\left( \alpha -1\right) ^{5}}} & 0
& 0 & 0 \\ 
0 & \left( \alpha -1\right) ^{\frac{1}{2}} & 0 & 0 \\ 
0 & 0 & 0 & \alpha -1%
\end{bmatrix}%
, \\
\phi _{4} &=&%
\begin{bmatrix}
0 & 0 & \left( 1-\alpha \right) ^{\frac{1}{2}} & 0 \\ 
0 & \left( 1-\alpha \right) ^{\frac{1}{2}} & 0 & 0 \\ 
\frac{\left( \alpha -1\right) ^{3}}{\sqrt{-\left( \alpha -1\right) ^{5}}} & 0
& 0 & 0 \\ 
0 & 0 & 0 & \alpha -1%
\end{bmatrix}%
,\phi _{5}=%
\begin{bmatrix}
0 & \sqrt{-\alpha } & 0 & 0 \\ 
0 & 0 & \sqrt{-\alpha } & 0 \\ 
-\sqrt{-\alpha } & 0 & 0 & 0 \\ 
0 & 0 & 0 & -\alpha 
\end{bmatrix}%
.
\end{eqnarray*}%
Then%
\begin{eqnarray*}
\phi _{1}\left( \left[ \delta _{e_{2},e_{2}}\right] +\alpha \left[ \delta
_{e_{3},e_{3}}\right] +\left[ \delta _{e_{4},e_{4}}\right] \right) 
&=&\alpha ^{2}\left( \left[ \delta _{e_{2},e_{2}}\right] +\alpha ^{-1}\left[
\delta _{e_{3},e_{3}}\right] +\left[ \delta _{e_{4},e_{4}}\right] \right) ,
\\
\phi _{2}\left( \left[ \delta _{e_{2},e_{2}}\right] +\alpha \left[ \delta
_{e_{3},e_{3}}\right] +\left[ \delta _{e_{4},e_{4}}\right] \right)  &=&\left[
\delta _{e_{2},e_{2}}\right] +\left( 1-\alpha \right) \left[ \delta
_{e_{3},e_{3}}\right] +\left[ \delta _{e_{4},e_{4}}\right] , \\
\phi _{3}\left( \left[ \delta _{e_{2},e_{2}}\right] +\alpha \left[ \delta
_{e_{3},e_{3}}\right] +\left[ \delta _{e_{4},e_{4}}\right] \right) 
&=&\left( \alpha -1\right) ^{2}\left( \left[ \delta _{e_{2},e_{2}}\right]
+\left( 1-\alpha \right) ^{-1}\left[ \delta _{e_{3},e_{3}}\right] +\left[
\delta _{e_{4},e_{4}}\right] \right) , \\
\phi _{4}\left( \left[ \delta _{e_{2},e_{2}}\right] +\alpha \left[ \delta
_{e_{3},e_{3}}\right] +\left[ \delta _{e_{4},e_{4}}\right] \right) 
&=&\left( \alpha -1\right) ^{2}\left( \left[ \delta _{e_{2},e_{2}}\right]
+\alpha \left( \alpha -1\right) ^{-1}\left[ \delta _{e_{3},e_{3}}\right] +%
\left[ \delta _{e_{4},e_{4}}\right] \right) , \\
\phi _{5}\left( \left[ \delta _{e_{2},e_{2}}\right] +\alpha \left[ \delta
_{e_{3},e_{3}}\right] +\left[ \delta _{e_{4},e_{4}}\right] \right) 
&=&\alpha ^{2}\left( \left[ \delta _{e_{2},e_{2}}\right] +\alpha ^{-1}\left(
\alpha -1\right) \left[ \delta _{e_{3},e_{3}}\right] +\left[ \delta
_{e_{4},e_{4}}\right] \right) .
\end{eqnarray*}%
Therefore, ${\mathcal{E}}_{5,18}^{\alpha }\cong {\mathcal{E}}_{5,18}^{\alpha
^{-1}}\cong {\mathcal{E}}_{5,18}^{1-\alpha }\cong {\mathcal{E}}%
_{5,18}^{\left( 1-\alpha \right) ^{-1}}\cong {\mathcal{E}}_{5,18}^{\alpha
\left( \alpha -1\right) ^{-1}}\cong {\mathcal{E}}_{5,18}^{\alpha ^{-1}\left(
\alpha -1\right) }$. This completes the proof of the claim.

\subsection{\textbf{1-dimensional annihilator extensions of }${\mathcal{E}}%
_{4,6}$}

The automorphism group $Aut\left( {\mathcal{E}}_{4,6}\right) $\ consists of%
\begin{equation*}
\phi =%
\begin{bmatrix}
a_{11} & 0 & 0 & 0 \\ 
0 & a_{11}^{2} & 0 & 0 \\ 
0 & 0 & a_{33} & 0 \\ 
a_{41} & 0 & a_{43} & a_{11}^{4}%
\end{bmatrix}%
:a_{33}^{2}=a_{11}^{4},a_{11}\neq 0.
\end{equation*}%
Let $\left[ \theta \right] =\alpha _{1}\left[ \delta _{e_{3},e_{3}}\right]
+\alpha _{2}\left[ \delta _{e_{4},e_{4}}\right] $ with $\alpha _{2}\neq 0$.
If $\alpha _{1}=0$, we then may assume that $\theta =\delta _{e_{4},e_{4}}$.
So we get the algebra%
\begin{equation*}
{\mathcal{E}}%
_{5,19}:e_{1}^{2}=e_{2},e_{2}^{2}=e_{3}^{2}=e_{4},e_{4}^{2}=e_{5}.
\end{equation*}%
Otherwise, we choose $\phi $ to be the following automorphism%
\begin{equation*}
\phi =%
\begin{bmatrix}
\sqrt[4]{\alpha _{1}\alpha _{2}^{-1}} & 0 & 0 & 0 \\ 
0 & \sqrt{\alpha _{1}\alpha _{2}^{-1}} & 0 & 0 \\ 
0 & 0 & \sqrt{\alpha _{1}\alpha _{2}^{-1}} & 0 \\ 
0 & 0 & 0 & \alpha _{1}\alpha _{2}^{-1}%
\end{bmatrix}%
.
\end{equation*}%
Then $\left[ \phi \theta \right] =\alpha _{1}^{2}\alpha _{2}^{-1}\left( %
\left[ \delta _{e_{3},e_{3}}\right] +\left[ \delta _{e_{4},e_{4}}\right]
\right) $. So we may assume that $\theta =\delta _{e_{3},e_{3}}+\delta
_{e_{4},e_{4}}$. Hence we get the algebra%
\begin{equation*}
{\mathcal{E}}%
_{5,20}:e_{1}^{2}=e_{2},e_{2}^{2}=e_{4},e_{3}^{2}=e_{4}+e_{5},e_{4}^{2}=e_{5}.
\end{equation*}%
Since $\Psi \left( \left\langle \left[ \delta _{e_{4},e_{4}}\right]
\right\rangle \right) =\left( 3\right) $ and $\Psi \left( \left\langle \left[
\delta _{e_{3},e_{3}}\right] +\left[ \delta _{e_{4},e_{4}}\right]
\right\rangle \right) =\left( 2\right) $, ${\mathcal{E}}_{5,19}$ is
non-isomorphic to ${\mathcal{E}}_{5,20}$.

\subsection{\textbf{1-dimensional annihilator extensions of }${\mathcal{E}}%
_{4,7}$}

The automorphism group $Aut\left( {\mathcal{E}}_{4,7}\right) $\ consists of%
\begin{equation*}
\phi =%
\begin{bmatrix}
a_{11} & a_{12} & 0 & 0 \\ 
a_{21} & a_{22} & 0 & 0 \\ 
0 & 0 & a_{33} & 0 \\ 
a_{41} & a_{42} & 0 & a_{33}^{2}%
\end{bmatrix}%
,
\end{equation*}%
such that%
\begin{eqnarray*}
a_{11}a_{12}+a_{22}a_{21} &=&0, \\
a_{11}^{2}+a_{21}^{2}-a_{33} &=&0, \\
a_{12}^{2}+a_{22}^{2}-a_{33} &=&0, \\
a_{33}\left( a_{11}a_{22}-a_{12}a_{21}\right) &\neq &0.
\end{eqnarray*}%
Let $\left[ \theta \right] =\alpha _{1}\left[ \delta _{e_{2},e_{2}}\right]
+\alpha _{2}\left[ \delta _{e_{4},e_{4}}\right] $ with $\alpha _{2}\neq 0$.
If $\alpha _{1}=0$, we then may assume that $\theta =\delta _{e_{4},e_{4}}$.
So we get the algebra%
\begin{equation*}
{\mathcal{E}}%
_{5,21}:e_{1}^{2}=e_{2}^{2}=e_{3},e_{3}^{2}=e_{4},e_{4}^{2}=e_{5}.
\end{equation*}%
Otherwise, we choose $\phi $ to be the following automorphism%
\begin{equation*}
\phi =%
\begin{bmatrix}
\alpha _{1}^{\frac{1}{6}}\alpha _{2}^{-\frac{1}{6}} & 0 & 0 & 0 \\ 
0 & \alpha _{1}^{\frac{1}{6}}\alpha _{2}^{-\frac{1}{6}} & 0 & 0 \\ 
0 & 0 & \alpha _{1}^{\frac{1}{3}}\alpha _{2}^{-\frac{1}{3}} & 0 \\ 
0 & 0 & 0 & \alpha _{1}^{\frac{2}{3}}\alpha _{2}^{-\frac{2}{3}}%
\end{bmatrix}%
.
\end{equation*}%
Then $\left[ \phi \theta \right] =\alpha _{1}^{\frac{4}{3}}\alpha _{2}^{-%
\frac{1}{3}}\left( \left[ \delta _{e_{2},e_{2}}\right] +\left[ \delta
_{e_{4},e_{4}}\right] \right) $. So we may assume that $\theta =\delta
_{e_{2},e_{2}}+\delta _{e_{4},e_{4}}$. Hence we get the algebra%
\begin{equation*}
{\mathcal{E}}%
_{5,22}:e_{1}^{2}=e_{3},e_{2}^{2}=e_{3}+e_{5},e_{3}^{2}=e_{4},e_{4}^{2}=e_{5}.
\end{equation*}%
Further, ${\mathcal{E}}_{5,21},$ ${\mathcal{E}}_{5,22}$ are non-isomorphic
since $\Psi \left( \left\langle \left[ \delta _{e_{4},e_{4}}\right]
\right\rangle \right) =\left( 3\right) $ while $\Psi \left( \left\langle %
\left[ \delta _{e_{2},e_{2}}\right] +\left[ \delta _{e_{4},e_{4}}\right]
\right\rangle \right) =\left( 2\right) $.

\subsection{\textbf{1-dimensional annihilator extensions of }${\mathcal{E}}%
_{4,8}$}

The automorphism group $Aut\left( {\mathcal{E}}_{4,8}\right) $\ consists of%
\begin{equation*}
\phi _{1}=%
\begin{bmatrix}
a_{11} & 0 & 0 & 0 \\ 
0 & a_{22} & 0 & 0 \\ 
0 & 0 & a_{11}^{2} & 0 \\ 
a_{41} & a_{42} & 0 & a_{11}^{2}%
\end{bmatrix}%
,\phi _{2}=%
\begin{bmatrix}
0 & a_{12} & 0 & 0 \\ 
a_{21} & 0 & 0 & 0 \\ 
0 & 0 & a_{12}^{2} & 0 \\ 
a_{41} & a_{42} & -a_{12}^{4} & a_{12}^{4}%
\end{bmatrix}%
:a_{11}\neq 0,a_{11}^{2}=a_{22}^{2},a_{12}\neq 0,a_{12}^{2}=a_{21}^{2}.
\end{equation*}

Let $\left[ \theta \right] =\alpha _{1}\left[ \delta _{e_{3},e_{3}}\right]
+\alpha _{2}\left[ \delta _{e_{4},e_{4}}\right] $ with $\alpha _{2}\neq 0$.
Then we may assume that $\theta =\alpha \delta _{e_{3},e_{3}}+\delta
_{e_{4},e_{4}}$ where $\alpha \in \mathbb{F}$. So we get the algebras%
\begin{equation*}
{\mathcal{E}}_{5,23}^{\alpha \in \mathbb{F}%
}:e_{1}^{2}=e_{3}+e_{4},e_{2}^{2}=e_{3}+\alpha
e_{5},e_{3}^{2}=e_{4},e_{4}^{2}=e_{5}.
\end{equation*}%
Further, we claim that ${\mathcal{E}}_{5,23}^{\alpha }$ is isomorphic to ${%
\mathcal{E}}_{5,23}^{\beta }$ if and only if $\alpha =\beta $. To see this,
let ${\mathcal{E}}_{5,23}^{\alpha }\cong {\mathcal{E}}_{5,23}^{\beta }$.
Then there exist a $\phi \in Aut\left( {\mathcal{E}}_{4,8}\right) $ such that%
\begin{equation}
\phi \left( \alpha \left[ \delta _{e_{3},e_{3}}\right] +\left[ \delta
_{e_{4},e_{4}}\right] \right) =\lambda \left( \beta \left[ \delta
_{e_{3},e_{3}}\right] +\left[ \delta _{e_{4},e_{4}}\right] \right) ;\lambda
\in \mathbb{F}^{\ast }.  \label{H}
\end{equation}%
Suppose first that $\phi =\phi _{1}$. Then Eq. $\left( \ref{H}\right) $\
amounts to the following polynomial equations:%
\begin{equation*}
\alpha a_{11}^{4}-a_{41}^{2}+a_{42}^{2}=\lambda \beta ,a_{11}^{4}=\lambda
,a_{41}a_{42}=0,a_{11}^{2}a_{41}=0,a_{11}^{2}a_{42}=0.
\end{equation*}%
As $a_{11}\neq 0$ we have $a_{41}=a_{42}=0$. Consequently, we get $\alpha
=\beta $. Assume now that $\phi =\phi _{2}$. Then Eq. $\left( \ref{H}\right) 
$ amounts to the following polynomial equations:%
\begin{equation*}
a_{12}^{4}\left( a_{12}^{4}+\alpha \right) -a_{41}^{2}+a_{42}^{2}=\lambda
\beta ,a_{12}^{8}=\lambda
,a_{41}a_{42}=0,a_{12}^{4}a_{41}=0,a_{12}^{4}a_{41}=0,a_{12}^{4}a_{42}=0,a_{12}^{4}a_{42}=0,a_{12}^{8}=0.
\end{equation*}%
The last equation implies that $a_{12}=0$, which is impossible as $\det \phi
\neq 0$ (i.e., $\phi \notin \mathcal{S}_{\theta }\left( {\mathcal{E}}%
_{4,8}\right) $). Therefore, we have $\alpha =\beta $ when ${\mathcal{E}}%
_{5,23}^{\alpha }\cong {\mathcal{E}}_{5,23}^{\beta }$, as claimed.

\subsection{\textbf{1-dimensional annihilator extensions of }${\mathcal{E}}%
_{4,9}$}

Since ${\mathcal{H}}\left( {\mathcal{E\times E}},\mathbb{F}\right)
=\left\langle \left[ \delta _{e_{4},e_{4}}\right] \right\rangle $, we
therefore get the algebra 
\begin{equation*}
{\mathcal{E}}%
_{4,24}:e_{1}^{2}=e_{2},e_{2}^{2}=e_{3},e_{3}^{2}=e_{4},e_{4}^{2}=e_{5}.
\end{equation*}

\subsection{\textbf{1-dimensional annihilator extensions of }${\mathcal{E}}%
_{4,10}$}

The automorphism group $Aut\left( {\mathcal{E}}_{4,10}\right) $\ consists of%
\begin{equation*}
\phi =%
\begin{bmatrix}
a_{11} & 0 & 0 & 0 \\ 
0 & a_{22} & 0 & 0 \\ 
a_{31} & 0 & a_{11}^{2} & 0 \\ 
a_{41} & a_{42} & 0 & a_{22}^{2}%
\end{bmatrix}%
:a_{11}a_{22}\neq 0.
\end{equation*}%
Let $\left[ \theta \right] =\alpha _{1}\left[ \delta _{e_{3},e_{3}}\right]
+\alpha _{2}\left[ \delta _{e_{4},e_{4}}\right] $ with $\alpha _{1}\alpha
_{2}\neq 0$. Let $\phi $ be the following automorphism%
\begin{equation*}
\phi =%
\begin{bmatrix}
\alpha _{1}^{-\frac{1}{4}} & 0 & 0 & 0 \\ 
0 & \alpha _{2}^{-\frac{1}{4}} & 0 & 0 \\ 
0 & 0 & \alpha _{1}^{-\frac{1}{2}} & 0 \\ 
0 & 0 & 0 & \alpha _{2}^{-\frac{1}{2}}%
\end{bmatrix}%
.
\end{equation*}%
Then $\left[ \phi \theta \right] =\left[ \delta _{e_{3},e_{3}}\right] +\left[
\delta _{e_{4},e_{4}}\right] $. So we may assume that $\theta =\delta
_{e_{3},e_{3}}+\delta _{e_{4},e_{4}}$. Thus we get the algebra%
\begin{equation*}
{\mathcal{E}}%
_{4,25}:e_{1}^{2}=e_{3},e_{2}^{2}=e_{4},e_{3}^{2}=e_{4}^{2}=e_{5}.
\end{equation*}

\subsection{\textbf{2-dimensional annihilator extensions of }${\mathcal{E}}%
_{3,1}$}

Here $Aut\left( {\mathcal{E}}_{4,1}\right) =GL\left( {\mathcal{E}}%
_{3,1}\right) $ and ${\mathcal{H}}\left( {\mathcal{E\times E}},\mathbb{F}%
\right) =Sym\left( {\mathcal{E}}_{3,1}\right) $. Let $\theta _{1},\theta
_{2}\in Sym\left( {\mathcal{E}}_{3,1}\right) $ such that $\theta _{1}^{\bot
}\cap \theta _{2}^{\bot }=0$. Without loss of generality we may assume that $%
\theta _{1}=\delta _{e_{1},e_{1}}+\alpha \delta _{e_{3},e_{3}},\theta
_{2}=\delta _{e_{2},e_{2}}+\beta \delta _{e_{3},e_{3}}$. Therefore, $\Psi
\left( \left\langle \theta _{1},\theta _{2}\right\rangle \right) \in \left\{
\left( 2,1\right) ,\left( 1,1\right) \right\} $. Suppose first that $\Psi
\left( \left\langle \theta _{1},\theta _{2}\right\rangle \right) =\left(
2,1\right) $. Then we may assume that $\theta _{1}=\delta
_{e_{1},e_{1}},\theta _{2}=\delta _{e_{2},e_{2}}+\beta \delta _{e_{3},e_{3}}$%
, where $\beta \neq 0$. Let $\phi $ be the following automorphism%
\begin{equation*}
\phi =%
\begin{bmatrix}
1 & 0 & 0 \\ 
0 & 1 & 0 \\ 
0 & 0 & \beta ^{-\frac{1}{2}}%
\end{bmatrix}%
.
\end{equation*}%
Then $\phi \theta _{1}=\theta _{1},\phi \theta _{2}=\delta
_{e_{2},e_{2}}+\delta _{e_{3},e_{3}}$. So we get the algebra%
\begin{equation*}
{\mathcal{E}}_{4,26}:e_{1}^{2}=e_{4},e_{2}^{2}=e_{3}^{2}=e_{5}.
\end{equation*}%
Assume now that $\Psi \left( \left\langle \theta _{1},\theta
_{2}\right\rangle \right) =\left( 1,1\right) $. Then $\alpha \beta \neq 0$.
Let $\phi $ be the following automorphism%
\begin{equation*}
\phi =%
\begin{bmatrix}
1 & 0 & 0 \\ 
0 & \alpha ^{-\frac{1}{2}}\beta ^{\frac{1}{2}} & 0 \\ 
0 & 0 & \alpha ^{-\frac{1}{2}}%
\end{bmatrix}%
.
\end{equation*}%
Then $\phi \theta _{1}=\delta _{e_{1},e_{1}}+\delta _{e_{3},e_{3}},\phi
\theta _{2}=\alpha ^{-1}\beta \left( \delta _{e_{2},e_{2}}+\delta
_{e_{3},e_{3}}\right) $. So we may assume that $\theta _{1}=\delta
_{e_{1},e_{1}}+\delta _{e_{3},e_{3}},\theta _{2}=\delta
_{e_{2},e_{2}}+\delta _{e_{3},e_{3}}$. Hence we get the algebra%
\begin{equation*}
{\mathcal{E}}_{4,27}:e_{1}^{2}=e_{4},e_{2}^{2}=e_{3}^{2}=e_{4}+e_{5}.
\end{equation*}

\subsection{\textbf{2-dimensional annihilator extensions of }${\mathcal{E}}%
_{3,2}$}

As ${\mathcal{H}}\left( {\mathcal{E}}_{3,2}{\mathcal{\times E}}_{3,2},%
\mathbb{F}\right) =\left\langle \left[ \delta _{e_{2},e_{2}}\right] ,\left[
\delta _{e_{3},e_{3}}\right] \right\rangle $ we get only one algebra, namely%
\begin{equation*}
{\mathcal{E}}_{4,28}:e_{1}^{2}=e_{2},e_{2}^{2}=e_{4},e_{3}^{2}=e_{5}.
\end{equation*}

\subsection{\textbf{2-dimensional annihilator extensions of }${\mathcal{E}}%
_{3,3}$}

As ${\mathcal{H}}\left( {\mathcal{E}}_{3,3}{\mathcal{\times E}}_{3,3},%
\mathbb{F}\right) =\left\langle \left[ \delta _{e_{2},e_{2}}\right] ,\left[
\delta _{e_{3},e_{3}}\right] \right\rangle $ we get only one algebra, namely%
\begin{equation*}
{\mathcal{E}}_{4,29}:e_{1}^{2}=e_{3},e_{2}^{2}=e_{3}+e_{4},e_{3}^{2}=e_{5}.
\end{equation*}

\begin{thm}
Any five-dimensional nilpotent evolution algebra with a natural basis $%
\left\{ e_{1},\ldots e_{5}\right\} $\ over an algebraically closed field $%
\mathbb{F}$ is isomorphic to one of the following algebras:

\begin{itemize}
\item ${\mathcal{E}}_{5,1}:$ all products are zero.

\item ${\mathcal{E}}_{5,2}:e_{1}^{2}=e_{2}.$

\item ${\mathcal{E}}_{5,3}:e_{1}^{2}=e_{3},e_{2}^{2}=e_{3}.$

\item ${\mathcal{E}}_{5,4}:e_{1}^{2}=e_{2},e_{2}^{2}=e_{3}.$

\item ${\mathcal{E}}_{5,5}:e_{1}^{2}=e_{4},e_{2}^{2}=e_{4},e_{3}^{2}=e_{4}.$

\item ${\mathcal{E}}_{5,6}:e_{1}^{2}=e_{2},e_{2}^{2}=e_{4},e_{3}^{2}=e_{4}.$

\item ${\mathcal{E}}_{5,7}:e_{1}^{2}=e_{3},e_{2}^{2}=e_{3},e_{3}^{2}=e_{4}.$

\item ${\mathcal{E}}%
_{5,8}:e_{1}^{2}=e_{3}+e_{4},e_{2}^{2}=e_{3},e_{3}^{2}=e_{4}.$

\item ${\mathcal{E}}_{5,9}:e_{1}^{2}=e_{2},e_{2}^{2}=e_{3},e_{3}^{2}=e_{4}.$

\item ${\mathcal{E}}_{5,10}:e_{1}^{2}=e_{3},e_{2}^{2}=e_{4}.$

\item ${\mathcal{E}}_{5,11}:e_{1}^{2}=e_{2}^{2}=e_{3}^{2}=e_{4}^{2}=e_{5}.$

\item ${\mathcal{E}}%
_{5,12}:e_{1}^{2}=e_{2},e_{2}^{2}=e_{3}^{2}=e_{4}^{2}=e_{5}.$

\item ${\mathcal{E}}%
_{5,13}:e_{1}^{2}=e_{2}^{2}=e_{3},e_{3}^{2}=e_{4}^{2}=e_{5}.$

\item ${\mathcal{E}}%
_{5,14}:e_{1}^{2}=e_{3}+e_{5},e_{2}^{2}=e_{3},e_{3}^{2}=e_{4}^{2}=e_{5}.$

\item ${\mathcal{E}}%
_{5,15}:e_{1}^{2}=e_{2},e_{2}^{2}=e_{3},e_{3}^{2}=e_{4}^{2}=e_{5}.$

\item ${\mathcal{E}}%
_{5,16}:e_{1}^{2}=e_{2}^{2}=e_{3}^{2}=e_{4},e_{4}^{2}=e_{5}.$

\item ${\mathcal{E}}%
_{5,17}:e_{1}^{2}=e_{3}^{2}=e_{4},e_{2}^{2}=e_{4}+e_{5},e_{4}^{2}=e_{5}.$

\item ${\mathcal{E}}_{5,18}^{\alpha \in \mathbb{F}^{\ast }\mathbb{-}\left\{
1\right\} }:e_{1}^{2}=e_{4},e_{2}^{2}=e_{4}+e_{5},e_{3}^{2}=e_{4}+\alpha
e_{5},e_{4}^{2}=e_{5}.$

\item ${\mathcal{E}}%
_{5,19}:e_{1}^{2}=e_{2},e_{2}^{2}=e_{3}^{2}=e_{4},e_{4}^{2}=e_{5}.$

\item ${\mathcal{E}}%
_{5,20}:e_{1}^{2}=e_{2},e_{2}^{2}=e_{4},e_{3}^{2}=e_{4}+e_{5},e_{4}^{2}=e_{5}. 
$

\item ${\mathcal{E}}%
_{5,21}:e_{1}^{2}=e_{2}^{2}=e_{3},e_{3}^{2}=e_{4},e_{4}^{2}=e_{5}.$

\item ${\mathcal{E}}%
_{5,22}:e_{1}^{2}=e_{3},e_{2}^{2}=e_{3}+e_{5},e_{3}^{2}=e_{4},e_{4}^{2}=e_{5}. 
$

\item ${\mathcal{E}}_{5,23}^{\alpha \in \mathbb{F}%
}:e_{1}^{2}=e_{3}+e_{4},e_{2}^{2}=e_{3}+\alpha
e_{5},e_{3}^{2}=e_{4},e_{4}^{2}=e_{5}.$

\item ${\mathcal{E}}%
_{4,24}:e_{1}^{2}=e_{2},e_{2}^{2}=e_{3},e_{3}^{2}=e_{4},e_{4}^{2}=e_{5}.$

\item ${\mathcal{E}}%
_{4,25}:e_{1}^{2}=e_{3},e_{2}^{2}=e_{4},e_{3}^{2}=e_{4}^{2}=e_{5}.$

\item ${\mathcal{E}}_{4,26}:e_{1}^{2}=e_{4},e_{2}^{2}=e_{3}^{2}=e_{5}.$

\item ${\mathcal{E}}_{4,27}:e_{1}^{2}=e_{4},e_{2}^{2}=e_{3}^{2}=e_{4}+e_{5}.$

\item ${\mathcal{E}}_{4,28}:e_{1}^{2}=e_{2},e_{2}^{2}=e_{4},e_{3}^{2}=e_{5}.$

\item ${\mathcal{E}}%
_{4,29}:e_{1}^{2}=e_{3},e_{2}^{2}=e_{3}+e_{4},e_{3}^{2}=e_{5}.$
\end{itemize}

Among these algebras there is precisely the following isomorphism:

\begin{itemize}
\item ${\mathcal{E}}_{5,18}^{\alpha }\cong {\mathcal{E}}_{5,18}^{\beta }$ if
and only if $\left( \beta -\alpha \right) \left( \beta -\frac{1}{\alpha }%
\right) \left( \beta -1+\alpha \right) \left( \beta -\frac{1}{1-\alpha }%
\right) \left( \beta -\frac{\alpha -1}{\alpha }\right) \left( \beta -\frac{%
\alpha }{\alpha -1}\right) =0.$
\end{itemize}
\end{thm}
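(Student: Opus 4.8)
The plan is to assemble the theorem from the orbit computations of the preceding subsections, the whole argument resting on the decomposition theorem and on $ann$ being an isomorphism invariant. Since $\mathcal{E}$ is nilpotent, $ann\left( \mathcal{E}\right) \neq 0$, so the decomposition theorem writes $\mathcal{E}\cong E_{\theta }$ with $E\cong \mathcal{E}/ann\left( \mathcal{E}\right) $ a nilpotent evolution algebra of dimension $\dim E=5-\dim ann\left( \mathcal{E}\right) \leq 4$, and with $\theta \in \mathcal{Z}\left( E\times E,V\right) $, $V=ann\left( \mathcal{E}\right) $, satisfying $\theta ^{\bot }\cap ann\left( E\right) =0$; by Lemma \ref{Ann(J+V)} the last condition gives $ann\left( E_{\theta }\right) =V$, so that $E$ is genuinely recovered as $\mathcal{E}/ann\left( \mathcal{E}\right) $. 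First I would split on whether $\mathcal{E}$ has an annihilator component.

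If $\mathcal{E}$ has an annihilator component, then $\mathcal{E}=\mathcal{F}\oplus \mathbb{F}e$ with $\mathbb{F}e\cong \mathcal{E}_{1,1}$ and $\mathcal{F}$ a four-dimensional nilpotent evolution algebra, so the four-dimensional classification (Table \ref{tab1}) yields $\mathcal{E}\cong \mathcal{E}_{4,i}\oplus \mathcal{E}_{1,1}=\mathcal{E}_{5,i}$ for some $1\leq i\leq 10$. If $\mathcal{E}$ has no annihilator component, then by the lemma characterising annihilator components the classes $\left[ \theta _{1}\right] ,\ldots ,\left[ \theta _{\dim V}\right] $ are linearly independent in $\mathcal{H}\left( E\times E,\mathbb{F}\right) $, forcing $\dim V\leq \dim \mathcal{H}\left( E\times E,\mathbb{F}\right) $. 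Using Lemma \ref{Qouitent} to read off $\dim \mathcal{H}=\dim E-\dim E^{\left\langle 2\right\rangle }$ for each lower-dimensional algebra and combining with $\dim E+\dim V=5$, I would enumerate the admissible pairs $\left( E,\dim V\right) $: the inequality excludes every $E$ of dimension $\leq 2$ and, among the three-dimensional algebras, excludes $\mathcal{E}_{3,4}$ (where $\dim \mathcal{H}=1<2$), leaving exactly $E\in \{\mathcal{E}_{4,1},\ldots ,\mathcal{E}_{4,10}\}$ with $\dim V=1$ and $E\in \{\mathcal{E}_{3,1},\mathcal{E}_{3,2},\mathcal{E}_{3,3}\}$ with $\dim V=2$. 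For each such $E$, Lemma \ref{same orbit} reduces the problem to orbit representatives of the $\mathcal{S}_{\theta }\left( E\right) $- and $GL\left( V\right) $-actions on the admissible $\left[ \theta \right] $; these are precisely the computations of the subsections above, yielding the remaining algebras $\mathcal{E}_{5,11},\ldots ,\mathcal{E}_{5,29}$ together with the one-parameter families $\mathcal{E}_{5,18}^{\alpha }$ and $\mathcal{E}_{5,23}^{\alpha }$. This shows the list is exhaustive.

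For non-redundancy I would argue in three layers. First, possessing an annihilator component is an isomorphism invariant, which separates $\mathcal{E}_{5,1},\ldots ,\mathcal{E}_{5,10}$ (all with such a component) from $\mathcal{E}_{5,11},\ldots ,\mathcal{E}_{5,29}$ (all without, since their defining $\left[ \theta _{j}\right] $ are independent). Second, within the first block the four-dimensional invariants distinguishing the $\mathcal{E}_{4,i}$ are unchanged by adjoining a trivial summand $\mathcal{E}_{1,1}$ (equivalently, the trivial summand cancels), so the $\mathcal{E}_{5,i}$, $1\leq i\leq 10$, are pairwise non-isomorphic. Third, within the second block $E\cong \mathcal{E}/ann\left( \mathcal{E}\right) $ --- and in particular $\dim ann\left( \mathcal{E}\right) =\dim V$ --- separates algebras coming from different bases $E$, so only coincidences inside a single subsection remain; these are exactly where the invariant $\Psi $ was used (e.g. to separate $\mathcal{E}_{5,13}$ from $\mathcal{E}_{5,14}$, or $\mathcal{E}_{5,17}$ from $\mathcal{E}_{5,18}^{\alpha }$) and where the explicit solution of the defining polynomial systems gave $\mathcal{E}_{5,23}^{\alpha }\cong \mathcal{E}_{5,23}^{\beta }\Leftrightarrow \alpha =\beta $ and the six-valued orbit condition for $\mathcal{E}_{5,18}^{\alpha }$.

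The main obstacle is the family $\mathcal{E}_{5,18}^{\alpha }$. Proving that $\mathcal{E}_{5,18}^{\alpha }\cong \mathcal{E}_{5,18}^{\beta }$ exactly when $\beta \in \{\alpha ,\alpha ^{-1},1-\alpha ,\left( 1-\alpha \right) ^{-1},\left( \alpha -1\right) \alpha ^{-1},\alpha \left( \alpha -1\right) ^{-1}\}$ requires, in the forward direction, writing out the full system expressing $\phi \left( \left[ \delta _{e_{2},e_{2}}\right] +\alpha \left[ \delta _{e_{3},e_{3}}\right] +\left[ \delta _{e_{4},e_{4}}\right] \right) =\lambda \left( \left[ \delta _{e_{2},e_{2}}\right] +\beta \left[ \delta _{e_{3},e_{3}}\right] +\left[ \delta _{e_{4},e_{4}}\right] \right) $ together with the defining equations of $Aut\left( \mathcal{E}_{4,5}\right) $, and extracting from its Gr\"{o}bner basis the degree-six relation that factors into the six linear factors in $\beta $; and in the backward direction exhibiting explicit automorphisms $\phi _{1},\ldots ,\phi _{5}\in Aut\left( \mathcal{E}_{4,5}\right) $ realising each nontrivial member of the orbit (the anharmonic-group orbit of $\alpha $). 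This symbolic elimination is the one step where inspection does not suffice; everything else is bookkeeping over the already-completed subsection computations.
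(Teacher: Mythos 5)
Your proposal is correct and follows essentially the same route as the paper: the theorem there is proved by exactly this accumulation of cases --- the decomposition ${\mathcal{E}}\cong E_{\theta}$ with $E\cong{\mathcal{E}}/ann({\mathcal{E}})$, the split on annihilator components, the bound $\dim V\leq\dim{\mathcal{H}}(E\times E,\mathbb{F})$ selecting the pairs $(E,\dim V)$ treated in the subsections, and the separation of the resulting algebras by $ann$, by $\Psi$, and by the Gr\"obner-basis computation for ${\mathcal{E}}_{5,18}^{\alpha}$. You have faithfully reconstructed the skeleton that the paper leaves distributed across its subsections.
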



\begin{thebibliography}{9}
\bibitem{Elduque} A. Elduque, A. Labra, Evolution algebras and graphs,
Journal of Algebra and its Applications. 14 (2015) 1550103.

\bibitem{Hegazi} A. S. Hegazi, H. Abdelwahab, Nilpotent evolution algebras
over arbitrary fields, submitted to Linear Algebra App. (accepted)
arxiv:1508.06860.

\bibitem{V} J. Tian, P. Vojtechovsky, Mathematical concepts of evolution
algebras in non-Mendelian genetics, in: Quasigroups and Related Systems,
2006: pp. 111--122.

\bibitem{T} J. Tian, Evolution algebras and their applications, Springer,
Berlin, 2008.

\bibitem{Z} K. Zhevlakov, Rings that are nearly associative, Academic Press,
New York, 1982.
\end{thebibliography}
\end{document}